\newtheorem{theorem}{Theorem}
\newtheorem{definition}[theorem]{Definition}
\newtheorem{remark}[theorem]{Remark}
\newtheorem{corollary}[theorem]{Corollary}
\newtheorem{lemma}[theorem]{Lemma}
\newtheorem{observation}[theorem]{Observation}
\newtheorem{fact}[theorem]{Fact}
\newtheorem{claim}[theorem]{Claim}
\newenvironment{subproof}[1][\proofname]{%
\begin{proof}[#1]%
	}{%
\end{proof}%
}
\newcommand{\p}[0]{\mathcal{P}}
\newcommand{\nin}[0]{\notin}
\newcommand{\Q}[0]{\mathcal{Q}}
\newcommand{\V}[0]{\mathcal{V}}
\newcommand{\W}[0]{\mathcal{W}}
\title[The homeomorphism group of a surface without boundary is minimal]{The compact-open topology on the homeomorphism group of a surface without boundary is minimal.}
\renewcommand{\emph}[1]{\textbf{#1}}
\author{J. de la Nuez Gonz\'alez }
\email{jnuezgonzalez@gmail.com}
\address{Korea Institute for Advanced Study (KIAS)}
\date{\today}
\thanks{Work supported by Samsung Science and Technology Foundation under Project Number SSTF-BA1301-51.  }
\begin{document}
\begin{abstract}
We show that the homeomorphism group of a surface without boundary does not admit a Hausdorff group topology strictly coarser than the compact-open topology. In combination with known automatic continuity results, this implies that the compact-open topology is the unique Hausdorff separable group topology on the group if the surface is closed or the complement in a closed surface of either a finite set or the union of a finite set and a Cantor set.
\end{abstract}

\maketitle
\renewcommand{\H}[0]{\mathcal{H}}
\newcommand{\T}[0]{\mathfrak{t}}
\newcommand{\tc}[0]{\T_{co}}
\newcommand{\nd}[0]{\mathcal{N}_{\T}(1)}
\newcommand{\cs}[0]{\mathcal{CS}}
\numberwithin{fact}{section}
\numberwithin{definition}{section}

\numberwithin{theorem}{section}
\numberwithin{lemma}{section}
\numberwithin{corollary}{section}
\numberwithin{observation}{section}
\numberwithin{claim}{section}

\setcounter{section}{-1}
\section{Introduction}
  
  We begin by recalling the following definition:
  \begin{definition}
  Let $G$ be a group and $\T$ a Hausdorff group topology on $G$. We say that $\T$ is minimal if $G$ admits no Hausdorff group topology strictly coarser than $\T$.
  \end{definition}
  The notion of minimality captures a key feature of compact groups and as such has received a substantial degree of attention in the literature. For the broader context and further questions we refer the reader to the comprehensive survey \cite{dikranjan2014minimality}.
  
  Given a topological space $X$, the compact-open topology on the homeomorphism group of $\H(X)$ of $X$ is the topology generated by all subsets of the form $[K,U]=\{f\in G\,|\,f(K)\subseteq U\}$, for $K\subseteq X$ compact and $U\subseteq X$ open in $X$. The following classical result is due to Arens \cite{arens1946topologies}. 
  \begin{fact}
  Let $X$ be a Hausdorff, locally compact and locally connected topological space. Then the compact-open topology is a group topology on $\H(X)$.
  \end{fact}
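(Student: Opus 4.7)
The plan is to verify separately that multiplication $(f,g)\mapsto fg$ and inversion $f\mapsto f^{-1}$ are continuous in the compact-open topology, checking the subbasic condition $[K,U]$ at an arbitrary point.

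For multiplication at $(f,g)$, I would fix $[K,U]$ with $fg(K)\subseteq U$. Since $g(K)$ is compact and contained in the open set $f^{-1}(U)$, local compactness together with Hausdorffness supplies an open set $V$ with compact closure such that $g(K)\subseteq V\subseteq \overline{V}\subseteq f^{-1}(U)$. Then $g\in[K,V]$ and $f\in[\overline{V},U]$, and for any $(g',f')$ in this product neighborhood one has $g'(K)\subseteq V\subseteq\overline{V}$, hence $f'g'(K)\subseteq f'(\overline{V})\subseteq U$. Only local compactness enters at this stage.

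For inversion, continuity of multiplication plus the resulting continuity of every left and right translation reduce the problem to continuity at the identity. Fix $[K,U]$ with $K\subseteq U$; the goal is a neighborhood $W$ of the identity such that $K\subseteq f(U)$ for every $f\in W$. For each $x\in K$, I would use local compactness and local connectedness to pick a compact connected neighborhood $D_x\subseteq U$ of $x$ together with a connected open neighborhood $N_x$ of $x$ satisfying $\overline{N_x}$ compact and $\overline{N_x}\subseteq\mathrm{int}(D_x)$. After extracting a finite subcover $N_{x_1},\ldots,N_{x_n}$ of $K$, the natural candidate is
\[W=\bigcap_{i=1}^{n}\bigl([\{x_i\},N_{x_i}]\cap[\partial D_{x_i},X\setminus\overline{N_{x_i}}]\bigr),\]
a subbasic intersection that contains the identity because $\overline{N_{x_i}}\subseteq\mathrm{int}(D_{x_i})$ is disjoint from $\partial D_{x_i}$, and each $\partial D_{x_i}$ is compact as a closed subset of the compact set $D_{x_i}$.

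The critical step, where I expect local connectedness to be genuinely essential, is a dichotomy argument verifying that $W$ works. For $f\in W$, the condition $f(\partial D_{x_i})\cap\overline{N_{x_i}}=\emptyset$ translates to $f^{-1}(\overline{N_{x_i}})\cap\partial D_{x_i}=\emptyset$, so the set $f^{-1}(\overline{N_{x_i}})$, which is connected because $\overline{N_{x_i}}$ is (as the closure of the connected $N_{x_i}$), must lie in one of the two disjoint open pieces $\mathrm{int}(D_{x_i})$ or $X\setminus D_{x_i}$. The remaining condition $f(x_i)\in N_{x_i}$ places $x_i$ in $f^{-1}(\overline{N_{x_i}})\cap\mathrm{int}(D_{x_i})$, selecting the first alternative, so $\overline{N_{x_i}}\subseteq f(D_{x_i})\subseteq f(U)$; covering then yields $K\subseteq f(U)$, i.e. $f^{-1}\in[K,U]$. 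Without local connectedness one could not guarantee that $\overline{N_{x_i}}$ is connected, and its preimage under $f$ might split between the two components, breaking the argument.
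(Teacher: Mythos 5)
Your argument is correct and complete: the multiplication step is the standard locally-compact ``sandwich'' argument, and the inversion step correctly isolates where local connectedness is essential, via the dichotomy that the connected set $f^{-1}(\overline{N_{x_i}})$ must lie entirely in $\mathrm{int}(D_{x_i})$ or in $X\setminus D_{x_i}$ once it misses $\partial D_{x_i}$. The paper itself gives no proof of this Fact, merely citing Arens (1946); your proof reconstructs essentially the argument that appears there and in standard references, so there is no divergence of approach to report.
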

  
  Given a manifold $X$, we denote by $\H_{0}(X)$ the subgroup of $\H(X)$ consisting of all those $h\in\H(X)$ for which there exists an isotopy between $h$ and the identity: i.e. some continuous map $H:X\times [0,1]\to X$ such that $H(-,t)\in\H(X)$ for all $t\in[0,1]$, $H(x,0)=h(x)$ and $h(x,1)=x$. We denote by $\H_{c0}(X)\leq\H_{0}(X)$ the subgroup of all of those for which there exists some compact set $K\subseteq X$ such that $H(-,t)$ is supported on $K$ for all $t\in[0,1]$.
  The term surface will be used to refer to topological $2$-manifolds. The purpose of this short note is to provide a very elementary proof of the following result:
  \begin{theorem}
  \label{t: main}Let $X$ be a surface without boundary and $G$ be a group satisfying $\H_{c0}(X)\leq G\leq \H(X)$. Then the restriction to $G$ of the compact-open topology on $\H(X)$ is minimal.
  \end{theorem}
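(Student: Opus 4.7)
Let $\T$ be a Hausdorff group topology on $G$ with $\T\subseteq\tc$. The objective is $\tc\subseteq\T$, which by homogeneity reduces to showing that every $\tc$-neighborhood of $1$ is a $\T$-neighborhood of $1$. The compact-open topology on $G$ has a basis at $1$ consisting of finite intersections of sets of the form $[K,U]=\{f\in G:f(K)\subseteq U\}$ with $K$ compact and $K\subseteq U$ open. Since finite intersections of $\T$-neighborhoods are $\T$-neighborhoods, and since in a surface every compact $K\subseteq U$ can be covered by finitely many closed disks $D_{i}$ each contained in an open disk $V_{i}\subseteq U$ (giving $\bigcap_{i}[D_{i},V_{i}]\subseteq[K,U]$), the problem reduces to the following.

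\textbf{Main claim.} For every closed disk $K$ contained in a relatively compact open disk $U\subseteq X$, the set $[K,U]$ is a $\T$-neighborhood of $1$.

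The plan for the main claim is by contradiction. If $[K,U]$ is not a $\T$-neighborhood of $1$, then the filter of $\T$-neighborhoods yields a net $(f_{W})$ with $f_{W}\to 1$ in $\T$ and $f_{W}(x_{W})\notin U$ for some $x_{W}\in K$. By compactness of $K$, I can pass to a subnet with $x_{W}\to x_{\infty}\in K$. The strategy is then to extract from this data a fixed $g\in G$ with $g\neq 1$ forced to lie in every $\T$-neighborhood of $1$, contradicting Hausdorffness.

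Producing such $g$ is where the hypothesis $\H_{c0}(X)\leq G$ becomes essential, since it furnishes a plentiful supply of compactly supported homeomorphisms with transitive behavior on points, compacta, and tuples of small disks. The natural maneuver is to fix a non-trivial $\sigma\in\H_{c0}(X)$ supported in a small disk $B$ with $x_{\infty}\in B$ and $\overline{B}\subseteq U$, and to study commutators such as $[f_{W},\sigma]=f_{W}\sigma f_{W}^{-1}\sigma^{-1}$, which tend to $1$ in $\T$ by continuity of the group operations. Since eventually $x_{W}\in B$ while $f_{W}(x_{W})\notin U$, the map $f_{W}$ moves part of $B$ clear of the support of $\sigma$, so the commutator behaves like a conjugated copy of $\sigma$ on one piece of its support and like $\sigma^{-1}$ on another. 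Using the two-dimensional room of the surface and a further choice of conjugators from $\H_{c0}(X)$ to align supports along the net, one should pin down a concrete non-identity limit of such commutators inside $G$. The principal obstacle is precisely this last manoeuvre: packaging the purely pointwise displacement data $(x_{W},f_{W}(x_{W}))$ into an honest non-identity limit in $G$, which the group-topology axioms will then force into every $\T$-neighborhood of $1$, yielding the desired contradiction with Hausdorffness.
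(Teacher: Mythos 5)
Your setup is fine and your reduction to disks $K\subseteq U$ is a sensible first move (it corresponds roughly to Observation \ref{o: generation compact-open topology}, which shows that the $H$-conjugates of a single $\V\in\nd$ generate $\tc$ at the identity). But you have correctly diagnosed the central difficulty, and you have not resolved it: there is a genuine gap at the step ``pin down a concrete non-identity limit.''

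The reason the commutator trick alone cannot close the argument is that $\T$-convergence of a net $f_{W}\to 1$ carries essentially no $\tc$-information. The commutators $[f_{W},\sigma]$ do tend to $1$ in $\T$, and each $f_{W}$ does move part of the support of $\sigma$ far away, but that only tells you something about what $[f_{W},\sigma]$ does at a few points. Two successive $f_{W}$ can disagree everywhere else, so the net $[f_{W},\sigma]$ need not converge in $\tc$, need not stabilize on any fixed compactum, and need not have any subnet accumulating at a non-identity element. Hausdorffness of $\T$ is only contradicted if you exhibit a \emph{fixed} $g\neq 1$ lying in every $\T$-neighbourhood of $1$; nothing in your construction produces such a $g$. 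In other words, the pointwise displacement data $(x_{W},f_{W}(x_{W}))$ is far too weak, and the net-and-commutator formalism does not bridge that gap.

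The paper gets around exactly this obstacle by replacing the pointwise analysis with a structural one. It first shows (Lemma \ref{l: nerve} and Corollary \ref{c: control on branch points}, built on the disk-mixing Lemmas \ref{l: cheese}--\ref{l: spread all over} and the extension Lemma \ref{l: close to graph}) that every $\V\in\nd$ must contain the entire fix-point stabilizer $H^{+}_{\cup\Gamma}$ of \emph{some} embedded graph $\Gamma$, with considerable freedom in choosing where the vertices of $\Gamma$ sit and how $\Gamma$ meets a given collection of arcs. These are very large subgroups, not isolated elements, and this is what replaces your missing ``limit.'' It then shows (Lemmas \ref{l: bigon} and \ref{l: flexible stabilizer}) that a fixed $g_{0}\in\H_{c0}(X)\setminus\{1\}$ supported on a disk can be written as a bounded word in elements of $\V_{0}$ and in $H^{+}_{[\Gamma]}$ (which itself sits in a bounded power of $\V_{0}$), yielding $g_{0}\in\V_{0}^{8}$ for every small enough symmetric $\V_{0}\in\nd$ and hence a contradiction with $T_{1}$. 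To complete your proof along the lines you sketched, you would need to supply the analogue of this ``richness of $\T$-neighbourhoods'' step and the graph-untangling machinery; the commutator observation by itself does not.
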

  
  It was first shown by Rosendal in \cite{rosendal2008automatic} that any group homomorphism from the homeomorphism group of a compact surface to a separable topological group is automatically continuous. This was later generalized by Mann to homeomorphism groups of compact manifolds in arbitrary dimension \cite{mann2016automatic} and later in \cite{mann2020automatic} to certain groups of homeomorphisms of non-compact manifolds. Combining these results with Theorem \ref{t: main} one obtains:
  
  \begin{corollary}
  Let $X$ be a closed surface and $F\subseteq X$ either a finite set (possibly empty) or the union of a finite set and a Cantor set. Then the compact-open topology is the unique separable Hausdorff group topology on the group $\H(X\setminus F)$.
  \end{corollary}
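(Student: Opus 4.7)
The plan is to combine Theorem~\ref{t: main} with the automatic continuity theorems cited immediately above it. Put $Y=X\setminus F$ and $G=\H(Y)$; in each of the listed cases $Y$ is a surface without boundary, so Theorem~\ref{t: main} applies to the chain $\H_{c0}(Y)\leq G\leq\H(Y)$ and yields that the restriction of the compact-open topology $\tc$ to $G$ is minimal.

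Next I would take an arbitrary separable Hausdorff group topology $\T$ on $G$ and consider the identity map $\mathrm{id}\colon(G,\tc)\to(G,\T)$. This is a group homomorphism from $(G,\tc)$ into a separable topological group, so by Rosendal's automatic continuity theorem when $F=\emptyset$, and by Mann's extensions \cite{mann2016automatic,mann2020automatic} in the remaining cases, it is continuous. Continuity of $\mathrm{id}\colon(G,\tc)\to(G,\T)$ is precisely the statement $\T\subseteq\tc$, so $\T$ is a Hausdorff group topology on $G$ coarser than $\tc$. Minimality of $\tc$ then forces $\T=\tc$, which is exactly the uniqueness claim.

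The only non-formal step is verifying that the hypotheses of the cited automatic continuity results cover every $F$ allowed in the statement; this is the main obstacle and amounts to checking that $X\setminus F$---a closed surface with finitely many punctures and possibly a Cantor set of ends removed---falls within the class of manifolds treated in \cite{rosendal2008automatic}, \cite{mann2016automatic} and \cite{mann2020automatic}. Once that bookkeeping is done, the two ingredients snap together without further work: automatic continuity supplies one inclusion of topologies for free, and Theorem~\ref{t: main} prohibits any proper refinement.
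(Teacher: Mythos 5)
Your proposal is correct and is precisely the argument the paper intends when it writes ``Combining these results with Theorem~\ref{t: main} one obtains'': automatic continuity applied to the identity map into $(G,\T)$ gives $\T\subseteq\tc$, and minimality then forces equality. The only caveat you already flag yourself---checking that the cited automatic continuity theorems really cover each allowed $F$---is indeed the point the paper is implicitly relying on (Rosendal for $F=\emptyset$, Mann's two papers for the punctured and punctured-plus-Cantor cases), and your division of labor among the three references matches the paper's.
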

  Note that the compact-open topology was already known to be the unique complete separable group topology on the homeomorphism group of a compact manifold (see \cite{kallman1986uniqueness} and \cite{mann2016automatic}).
  The automatic continuity result in \cite{mann2020automatic} applies to the group of homeomorphisms of a manifold $X$ preserving a set $F$ as above. However, as discussed there, in dimension $2$ said group endowed with the compact-open topology is isomorphic to $\H(X\setminus F)$ as a topological group via the restriction map \footnote{That the two topologies are the same can be seen by applying the criterion of equality between $\tc$ and $(\tc)_{\restriction F}$ in Theorem $5$ from \cite{chang2017minimum} to the set-wise stabilizer of $F$ in $\H_{0}(X)$. }.
  
  In \cite{chang2017minimum} Chang and Gartside provide a counterexample to the minimality of the compact-open topology for any compact manifold $X$ with non-empty boundary which admits the following alternative description. The restriction homomorphism $\rho:\H(X)\to \H(int(X))$ is injective and continuous, where $int(X)=X\setminus\partial X$. Let $\T_{co}^{\partial}$ denote the preimage by $\rho$ of the compact-open topology on $\H(int(X))$ ($(\tc)_{\restriction \partial X}$ in the notation of \cite{chang2017minimum}). It is shown in \cite{chang2017minimum} that $\T^{\partial}_{co}\subsetneq \T_{co}$ if $X$ is compact. In fact, the same argument shows that this is also true if $X$ is not compact for $\T_{co}^{\partial}$ defined as above. On the other hand, since $\H_{c0}(int(X))\leq im(\rho)$, Theorem \ref{t: main} applies and thus we get:
  \begin{corollary}
  For any surface with boundary the topology $\T_{co}^{\partial}$ on $\H(X)$ is minimal.
  \end{corollary}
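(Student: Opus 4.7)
The plan is to obtain the corollary as an essentially formal consequence of Theorem \ref{t: main} applied to the group $im(\rho)\leq\H(int(X))$, by transporting minimality across the embedding $\rho$.

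First I would unpack the definition of $\T_{co}^{\partial}$: by construction it is the initial topology on $\H(X)$ induced by $\rho:\H(X)\to\H(int(X))$ (where the target carries $\tc$). Since $\rho$ is also injective, this makes $\rho$ a topological group embedding of $(\H(X),\T_{co}^{\partial})$ onto its image $(im(\rho),(\tc)_{\restriction im(\rho)})$. In particular, the lattices of Hausdorff group topologies on $\H(X)$ lying below $\T_{co}^{\partial}$ and on $im(\rho)$ lying below $(\tc)_{\restriction im(\rho)}$ are in order-preserving bijection, so minimality is transferred.

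Next I would verify that Theorem \ref{t: main} applies to $G:=im(\rho)$. The remark in the excerpt already asserts $\H_{c0}(int(X))\leq im(\rho)$; the justification is that any $h\in\H_{c0}(int(X))$, being the identity outside a compact subset of $int(X)$, extends by the identity to an element of $\H(X)$, and an isotopy witness for $h$ extends likewise. Thus we have $\H_{c0}(int(X))\leq G\leq\H(int(X))$, and Theorem \ref{t: main} (applied to the surface without boundary $int(X)$) says that $(\tc)_{\restriction G}$ is a minimal Hausdorff group topology on $G$.

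Finally, combining the previous two steps: any Hausdorff group topology $\T'$ on $\H(X)$ strictly coarser than $\T_{co}^{\partial}$ would push forward along the topological group isomorphism $\rho$ to a Hausdorff group topology on $im(\rho)$ strictly coarser than $(\tc)_{\restriction im(\rho)}$, contradicting Theorem \ref{t: main}. Hence $\T_{co}^{\partial}$ is minimal. There is no real obstacle here beyond bookkeeping; the only point to double-check is that $\rho$ is genuinely a topological embedding for $\T_{co}^{\partial}$ (which is immediate from the defining universal property of the initial topology combined with injectivity) and that the extension-by-identity argument indeed places $\H_{c0}(int(X))$ inside $im(\rho)$.
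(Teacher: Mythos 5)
Your argument is correct and is essentially the paper's own: the corollary is presented in the text as an immediate consequence of Theorem \ref{t: main} applied to $G=im(\rho)$, using precisely that $\rho$ embeds $(\H(X),\T_{co}^{\partial})$ topologically onto $(im(\rho),(\tc)_{\restriction im(\rho)})$ and that $\H_{c0}(int(X))\leq im(\rho)$. You merely spell out the transfer-of-minimality step and the extension-by-identity argument for $\H_{c0}(int(X))\leq im(\rho)$, both of which the paper leaves implicit.
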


      \paragraph{\bf{Outline of the proof}}
        The proof of \ref{t: main} is structured as follows. We fix once and for all some surface $X$ without boundary, as well as some $G$ with $H\leq G\leq \H(X)$, where $H=\H_{c0}(X)$. We denote by $\tc$ the compact-open topology on $G$ and fix some group topology $\T$ on $G$ strictly coarser than $\tc$. Denote by $\nd$ the collection of open neighbourhoods of the identity in $\T$. Fix some arbitrary $g_{0}\in H\setminus\{1\}$ supported on a disk and assume the existence of $\V\in\nd$ with $g_{0}\nin\V$.
        
        Section \ref{s: preliminaries} establishes some notation and recalls some basic facts many readers will be familiar with. In Section \ref{s: neighbourhoods are rich} we see, using an auxiliary result from Section \ref{s: extension lemma}, that the assumption $\T\subsetneq\tc$ implies that every $\V\in\mathcal{N}_{\T}(1)$ is very rich, containing a plethora of fix-point stabilizers of embedded graphs.  Finally, in Sections \ref{untangling graphs},\ref{conclusion} we show this to be in contradiction with the existence of $\V$ as in the previous paragraph.

\section{Preliminaries}
  \label{s: preliminaries}
  We do not assume $X$ to be compact. However, we will often use the well-known fact that $X$ admits an exhaustion by compact submanifolds, which follows from a standard argument using the smoothability of surfaces and Whitney embedding theorem (see for instance \cite{milnor2016morse}).
  
  If we fix a metric $d$ on $X$ compatible with its topology, $\tc$ can be described as the topology of uniform convergence on compact sets. That is, a base of neighbourhoods of the identity for the compact-open topology on $G$ is given by the collection of sets
  $$
  \V_{K,\epsilon}=\{g\in G\,|\,\forall x\in K\,\,d(x,gx),d(x,g^{-1}x)<\epsilon\}
  $$
  where $K$ ranges over all compact subsets of $X$ and $\epsilon$ over all positive reals. Sometimes we might only be interested in the set $\V_{\epsilon}:=\V_{X,\epsilon}$ (potentially not open in $\tc$). For any subset $A\subseteq X$ and $\epsilon>0$ we let $N_{\epsilon}(A):=\{p\in X\,|\,d(p,A)<\epsilon\}$.
  We may and will assume that for any $p$ there is $\delta_{p}$ such that the ball $B(p,\delta_{p})$ is connected.
  
  Given a surface $Y$ and a closed set $F\subseteq Y$ we write $\H(Y,F)$ for the subgroup of all homeomorphisms
  of $Y$ that fixes $F$ point-wise, $\H_{0}(Y,F)$ for the subgroup of all homeomorphisms fixing $F$ isotopic to the identity by an isotopy that fixes $F$ at any point in time and $\H_{c0}(Y,F)$ for the group of all homeomorphisms isotopic to the identity through a compactly supported isotopy fixing $F$ at any point in time.\\
  
  
  
      \paragraph{\bf{Disks and arcs}}
        By a disk in a surface $Y$ we intend a homeomorphic image in $Y$ of a standard $2$-ball. We write $I$ for the interval $[-1,1]$. By an arc (in $Y$) from a point $p\in Y$ to a point $q\in Y$ we intend an injective continuous map $\alpha:I\to Y$ with $\alpha(-1)=p$ and $\alpha(1)=q$. We refer to $p$ and $q$ as the endpoints of $\alpha$. By a small abuse of notation, we may use the term $\alpha$ to refer to the image of $\alpha$. We will refer to $\alpha((-1,1))$ as the interior of $\alpha$.
        Whenever we concatenate a sequence of arcs or we restrict some arc to some interval $J\subseteq I$ we always assume an order-preserving reparametrization is applied at the end so that $I$ is again the domain of the resulting map.
        
        It is a consequence of the Jordan curve theorem and Sh\"onflies theorem that for any arc $\alpha\subseteq int(Y)$ there exists some homeomorphic embedding $\tilde{\alpha}:I\times I\to Y$ so that $\alpha$ is the restriction of $f$ to $\{0\}\times [-\frac{1}{2},\frac{1}{2}]\cong I$ \footnote{Working in the universal cover one can first extend the arc to a closed curve (using \cite{newman1939elements} p.164 or some other of the suggestions found at \href{https://mathoverflow.net/questions/57766/why-are-there-no-wild-arcs-in-the-plane}{https://mathoverflow.net/questions/57766/why-are-there-no-wild-arcs-in-the-plane}) and then apply Sch\"onflies theorem (see \cite{moise2013geometric}).}. We will refer to $\tilde{\alpha}$ as a rectangular extension of $\alpha$.
        By a regular path we mean the concatenation of finitely many arcs intersecting only at the endpoints.
        
        We say that two arcs $\alpha,\beta$, are transverse on an open set $U\subseteq Y$ if $\mathcal{I}:=U\cap\alpha\cap \beta$ is finite, coincides with $U\cap\alpha(\mathring{I})\cap\beta(\mathring{I})$ and $\alpha$ and $\beta$ cross at $q$ for any $q\in\mathcal{I}$ \footnote{That is, there is a disk with $p\in\mathring{D}$ and a homeomorphism $h:D\to B=\{(x,y)\in\mathbb{R}^{2}\,|\,|x|+|y|\leq 1\}$ taking $\alpha\cap D,\beta\cap D$ to $\{(x,y)\,|\,x=0\}\cap B$ and $\{(x,y)\,|\,y=0\}\cap B$ respectively.  }. We say that two collection of arcs $\mathcal{A},\mathcal{B}$ are transverse on $U$ if $\mathcal{A}$ is transverse on $U$ to each arc in $\mathcal{B}$. We will not mention $U$ explicitly when $U=X$. Sometimes we will talk about a collection of arcs being transverse to a certain $1$-dimensional compact submanifold $Z$, which simply means it is transverse to some decomposition of $Z$ into arcs.
        
        For the following see Chapters $2$ and $3$ in \cite{hirsch2012differential}.
        \begin{fact}
        \label{f: transversality}Let $Y$ be a surface and $\{\alpha_{i}\}_{i=}^{k},\{\beta_{j}\}_{j=1}^{r}$ two collections of arcs in $Y$ that can only meet $\partial Y$ at an endpoint and such that $\alpha_{i}\cap\alpha_{i'}$, $\beta_{j}\cap\beta_{j'}$ are finite for $1\leq i<i'\leq k$ and $1\leq j<j'\leq r$. Then for any neighbourhood $\W$ of the identity in the restriction of the compact-open topology to $\H(Y,\partial Y)$ there exists $\phi\in\W$ such that
        $\phi\cdot\alpha_{i}$ is transverse to $\beta_{j}$ on $int(Y)$ for all $1\leq i\leq k,1\leq j\leq r$.
        \end{fact}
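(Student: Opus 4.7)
The plan is to reduce the statement to the classical smooth transversality theorem (Hirsch, Chapters 2 and 3) after first replacing all arcs by smooth ones via small ambient isotopies, and then transporting the conclusion back through the smoothing homeomorphisms.

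Equip $Y$ with a smooth structure, using the fact that surfaces are smoothable. Shrinking $\W$ if necessary, we assume $\W=\V_{K,\epsilon}\cap \H(Y,\partial Y)$ for some compact $K$ containing all arcs, and pick open neighbourhoods of the identity $\W_{1},\W_{2},\W_{3}$ with $\W_{1}\W_{2}\W_{3}\subseteq \W$ (possible because $\T_{co}$ is a group topology by Arens).

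The first key step is a smoothing lemma: any topological arc in a smooth surface can be moved to a smooth arc by an ambient isotopy arbitrarily small in the compact-open topology and fixing $\partial Y$ pointwise. The rectangular extension recalled in the excerpt provides, around each point of the arc, local coordinates in which the arc is a straight segment; covering the arc with finitely many such patches and smooth charts of $Y$, and correcting locally via compactly supported perturbations, one assembles the required isotopy by a standard patching argument. Applied in turn to the families $\{\alpha_{i}\}$ and $\{\beta_{j}\}$, and using the finiteness of the pairwise intersections $\alpha_{i}\cap\alpha_{i'}$ and $\beta_{j}\cap\beta_{j'}$ to perform the local smoothings carefully in small neighbourhoods of those common points, this produces $\psi_{\alpha}\in \W_{1}$ and $\psi_{\beta}\in \W_{3}$ such that $\tilde{\alpha}_{i}:=\psi_{\alpha}\alpha_{i}$ and $\tilde{\beta}_{j}:=\psi_{\beta}\beta_{j}$ are all smooth arcs in $int(Y)$.

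With the arcs smooth, the smooth transversality theorem (Hirsch, Chapter 3) applied to the maps $\tilde{\alpha}_{i}:I\to Y$ and the 1-submanifolds $\tilde{\beta}_{j}$ yields a diffeomorphism $\psi\in \W_{2}$, obtained as the time-one map of a small smooth isotopy compactly supported in $int(Y)$, such that each $\psi\tilde{\alpha}_{i}$ is smoothly transverse to every $\tilde{\beta}_{j}$. Setting $\phi:=\psi_{\beta}^{-1}\psi\psi_{\alpha}$, we have $\phi\in \W$ and $\phi\alpha_{i}=\psi_{\beta}^{-1}(\psi\tilde{\alpha}_{i})$, while $\beta_{j}=\psi_{\beta}^{-1}(\tilde{\beta}_{j})$. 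Since smooth transversality implies the topological crossing condition described in the footnote, and since that condition is invariant under homeomorphism, $\phi\alpha_{i}$ is transverse to $\beta_{j}$ on $int(Y)$ for all $i,j$, as required.

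The main obstacle is the simultaneous smoothing step: within each family the arcs may share finitely many points, and one must choose the local perturbations so that a smoothing near one intersection point does not spoil another or reintroduce wildness at the shared points. Once this reduction is in place the transversality argument itself is a routine application of the results in Hirsch.
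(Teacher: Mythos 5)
The paper states this as a background Fact and simply cites Chapters 2 and 3 of Hirsch, so there is no proof in the text to compare against; your plan of smoothing both families by small ambient isotopies, applying smooth transversality, and transporting back through $\psi_\beta$ is exactly the reduction the citation is gesturing at, and the bookkeeping ($\phi=\psi_\beta^{-1}\psi\psi_\alpha$, transversality invariant under homeomorphisms, $\W_1\W_2\W_3\subseteq\W$) is sound.

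The one place you should not be breezy is the ``smoothing lemma,'' which you correctly single out as the main obstacle but then argue for via rectangular extensions alone. That argument handles a single arc, or disjoint arcs, but the hypothesis is only that pairwise intersections within each family are \emph{finite}: near a point where $\alpha_i$ and $\alpha_{i'}$ meet you are smoothing an embedded graph, not an arc, and a rectangular extension of one edge gives no control over the others through the shared vertex. What you actually need is that any finite embedded graph in the interior of a surface can be carried to a smooth (equivalently PL) one by an arbitrarily $C^0$-small ambient isotopy fixing $\partial Y$; this is true in dimension $2$ (by tameness of embedded graphs and the Hauptvermutung for surfaces, cf.\ Moise or Epstein), but it should be cited or proved rather than deduced from rectangular extensions. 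A related point worth a sentence: the smooth transversality theorem in Hirsch perturbs the maps $\tilde\alpha_i$, and to package the perturbation as a single small \emph{ambient} diffeomorphism $\psi$ supported in $int(Y)$ one should invoke parametric transversality with a family of compactly supported flows (or the isotopy extension theorem), again standard but not automatic. With those two steps properly referenced the proof is complete.
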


        Given a compact surface $Y$ by a triangulation $\mathcal{T}$ of $Y$ we mean a homeomorphism between the geometric realization of some finite simplicial complex, which we assume to contain no double edges $Y$. We can think of it as a collection of disks, the triangles of $\mathcal{T}$ and of arcs, the edges of $\mathcal{T}$. For the following see \cite{moise2013geometric}.
        \begin{fact}
        Every compact surface admits a triangulation. Given two compact surfaces $Y,Y'$ with $Y\subseteq int(Y')$ every triangulation of $Y$ extends to a triangulation of $Y'$.
        \end{fact}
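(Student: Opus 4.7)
The plan is to establish the two assertions in sequence, reducing the first to smoothing theory and the second to a collaring construction. For existence of a triangulation of a compact surface $Y$, I would invoke the classical result that every topological $2$-manifold admits a compatible smooth structure. Given such a structure, I would put a Riemannian metric on $Y$, choose a sufficiently fine finite net $P \subseteq Y$ with pairwise distances smaller than some uniform constant tied to the injectivity radius, and take $\mathcal{T}$ to be built from geodesic simplices spanned by triples of points of $P$. After a small generic perturbation of $P$ (so that no four Voronoi cells meet at a common point and so that the relevant edges meet transversely), the resulting object is a finite simplicial complex whose geometric realisation is naturally homeomorphic to $Y$. An alternative route, closer in spirit to the combinatorial flavour of the paper, is to cover $Y$ by finitely many closed Euclidean charts (using compactness), triangulate each chart, and rectify edge intersections on overlaps by Fact \ref{f: transversality}.

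For the extension, let $Z = Y' \setminus \mathrm{int}(Y)$ and choose a collar $C \cong \partial Y \times [0,1]$ of $\partial Y$ inside $Z$. The given triangulation of $Y$ restricts to a triangulation of the circles $\partial Y$; extend it to the collar by replacing each boundary edge $e$ with a rectangle $e \times [0,1]$ subdivided into two triangles by a diagonal. This yields a triangulation of $Y \cup C$ which restricts to the given one on $Y$, and which induces a specific triangulation $\mathcal{T}_{1}$ on the circles $\partial Y \times \{1\}$. It remains to triangulate $Z \setminus \mathrm{int}(C)$, a compact surface with boundary, in a way that agrees with $\mathcal{T}_{1}$ on that boundary component. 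I would first triangulate it without boundary constraints using the existence assertion already proved, obtaining a possibly different triangulation $\mathcal{T}_{2}$ of the same circles, and then interpolate between $\mathcal{T}_{1}$ and $\mathcal{T}_{2}$ using a further thin collar. The interpolation is possible because any two triangulations of a circle admit a common refinement, and a collar $S^{1} \times [0,1]$ can be triangulated to realise any such common refinement as a simplicial cobordism.

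The principal obstacle is really the first part: extracting a piecewise-linear structure from the purely topological data of a surface. Classical arguments (Rad\'o, Moise) must rule out pathological behaviour of topological arcs in the plane and ensure enough transversality to glue local triangulations coherently; this is the nontrivial content of \cite{moise2013geometric}, and in a self-contained treatment one cannot avoid it. The extension step, by contrast, is essentially formal once existence and a boundary-interpolation lemma for thin annuli are in hand. The most delicate but elementary step remaining is the bookkeeping required to triangulate the annular cobordism $S^{1} \times [0,1]$ interpolating between two prescribed triangulations of its boundary circles, which I would handle by moving one vertex at a time across the annulus via bigons.
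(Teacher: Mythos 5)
The paper offers no proof of this Fact; it is stated with a direct citation to \cite{moise2013geometric}, so there is no in-paper argument to compare against. Judged on its own merits, your sketch is a reasonable outline and, importantly, you correctly locate the genuine difficulty in the first clause (Rad\'o's theorem) rather than the second.

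One caution on the existence part: routing through ``every topological $2$-manifold admits a compatible smooth structure'' risks circularity, because the classical proof of smoothability of surfaces goes \emph{through} triangulability (Rad\'o gives a PL structure, which is then smoothed). If you want this route to be honest you should rely on a proof of smoothability that does not itself presuppose triangulation, such as Hatcher's adaptation of the Kirby torus trick to dimension $2$; otherwise you should present the geodesic/Delaunay construction as a way to \emph{upgrade} a smooth structure to a triangulation rather than as an independent proof of triangulability. Your alternative ``Euclidean charts plus transversality'' route is closer to Moise's actual argument, but you are right that the rectification of overlapping grids into a coherent global complex is precisely where the Jordan--Schoenflies machinery is consumed, so invoking Fact~\ref{f: transversality} alone does not discharge it.

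The extension argument is correct and standard. You should phrase the interpolation step so that it does not silently change the decomposition: rather than ``inserting a further thin collar'' after the fact, take a collar $C\cong\partial Y\times[0,2]$ inside $Y'\setminus \mathrm{int}(Y)$ at the outset, push the given boundary triangulation across $\partial Y\times[0,1]$ by the prismatic subdivision you describe, triangulate $Y'\setminus\big(Y\cup(\partial Y\times[0,2))\big)$ freely, and then use $\partial Y\times[1,2]$ as the interpolation annulus. The interpolation itself is fine: pass through a common refinement $\mathcal{T}_3$ of the two boundary triangulations $\mathcal{T}_1,\mathcal{T}_2$ of the circle, and triangulate each half-annulus so that each edge of the coarser triangulation, subdivided on the other side, bounds a triangulated prism. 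That is more direct and less error-prone than ``moving one vertex at a time via bigons.'' Note also that the hypothesis $Y\subseteq \mathrm{int}(Y')$ is exactly what furnishes the collar on the $Y$ side without touching $\partial Y'$; your write-up uses this implicitly and it is worth saying so.
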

        
        We will often repeatedly use the following weak version of Alexander Lemma (see \cite{farb2011primer}, Chapter 4):
        \begin{fact}
        \label{f: alexander} If $D$ is a disk, then $\H(D,\partial D)=\H_{0}(D,\partial D)$. Therefore any homeomorphism of $X$ supported on an embedded disk is in $H$.
        \end{fact}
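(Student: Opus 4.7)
The plan is to invoke the classical Alexander trick. After identifying $D$ with the closed unit disk $\mathbb{D}\subseteq\mathbb{R}^{2}$, I would fix an arbitrary $f\in\H(D,\partial D)$ and define an explicit isotopy $F:D\times[0,1]\to D$ by
\[
F(x,t)=\begin{cases} t\cdot f(x/t) & \text{if } \|x\|\leq t,\\ x & \text{if } \|x\|\geq t, \end{cases}
\]
for $t\in(0,1]$, and $F(x,0)=x$. The first step is to check that for each $t\in(0,1]$ the slice $F(-,t)$ is a well-defined homeomorphism of $D$ fixing $\partial D$ pointwise: on the disk of radius $t$ it is a rescaled copy of $f$, outside it is the identity, and the two prescriptions agree on the circle of radius $t$ because $f$ fixes $\partial\mathbb{D}$ pointwise. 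At $t=1$ one recovers $f$.

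The next step, which is the only nontrivial point, is continuity of $F$ as a map on $D\times[0,1]$, and in particular at points of the form $(x,0)$. Here I would just note that for $\|x\|\leq t$ one has $\|F(x,t)-x\|\leq\|F(x,t)\|+\|x\|\leq 2t$ since $f$ takes values in $\mathbb{D}$, while for $\|x\|\geq t$ the value equals $x$; hence $F(-,t)$ converges uniformly to the identity as $t\to 0$, and joint continuity at $(x,0)$ follows. Continuity at points with $t>0$ is immediate from the continuity of $f$ and the matching of the two pieces along $\|x\|=t$. This gives the first assertion, $\H(D,\partial D)=\H_{0}(D,\partial D)$.

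For the second assertion, given $h\in\H(X)$ supported on an embedded disk $D\subseteq X$, the restriction $h|_{D}$ lies in $\H(D,\partial D)$ because $h$ is the identity outside $D$ and hence, by continuity, on $\partial D$. Applying the first part to $h|_{D}$ produces an isotopy of $D$ through homeomorphisms fixing $\partial D$, and this isotopy extends by the identity on $X\setminus D$ to an isotopy of $X$: the extension is continuous precisely because each $F(-,t)$ is the identity on $\partial D$, and it is supported on the compact set $D$. Hence $h\in\H_{c0}(X)=H$.

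I do not anticipate a real obstacle here; the only place that requires any care is the uniform bound $\|F(x,t)-x\|\leq 2t$ needed for continuity at $t=0$, which is the content of Alexander's original argument.
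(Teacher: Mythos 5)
Your proof is correct and is precisely the classical Alexander trick that the paper invokes via its citation to Farb--Margalit, Chapter 4; the cone isotopy $F(x,t)$, the uniform bound $\lVert F(x,t)-x\rVert\leq 2t$ giving joint continuity at $t=0$, and the extension-by-identity argument for compact support all match the standard treatment. Nothing further is needed.
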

        
      \paragraph{\bf{Dehn twists}}
        
        An annulus $A$ in $X$ is the image of some homeomorphic embedding $h:S^{1}\to I\to X$. A core curve of $A$ is the image of $\alpha:S^{1}\to I\times S^{1}$, $\alpha(s)=(0,s)$ by some such $h$ with $im(h)=A$ and a Dehn twist over $A$ the homeomorphism of $X$ resulting from pushing forward the homeomorphism $(s,t)\mapsto (s,t+s)$ of $S^{1}\to I$ onto $A$ via some such $h$ an then extending it by the identity outside of $A$.
        \begin{observation}
        \label{o: dehn twist} Let $A$ be an annulus and $\gamma,\delta$ two disjoint arcs in $A$ joining the two boundary components of $A$. Let $\tau$ a Dehn twist over $A$. Then $\gamma\cup\tau^{2}\cdot\delta$ is connected and contains some core curve of $A$.
        \end{observation}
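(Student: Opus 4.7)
The plan is to work in the universal cover $\pi\colon\tilde{A}\to A$ of the annulus, identifying $A$ with $I\times S^{1}$ and $\tilde A$ with $I\times\mathbb R$, so that the deck group of $\pi$ is generated by the vertical unit translation $T(s,t)=(s,t+1)$. Since $\gamma$ is a simple arc joining the two components of $\partial A$, its preimage $\pi^{-1}(\gamma)$ is a pairwise disjoint family $\{\tilde\gamma_{k}\}_{k\in\mathbb Z}$ of arcs joining the two components of $\partial\tilde A$, with $\tilde\gamma_{k+1}=T\tilde\gamma_{k}$. These arcs slice $\tilde A$ into a sequence of closed topological rectangles $S_{k}$, each bounded by $\tilde\gamma_{k}$, $\tilde\gamma_{k+1}$, and two sub-arcs of $\partial\tilde A$. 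Pick the lift $\tilde\delta$ of $\delta$ contained in $S_{0}$ (possible because $\delta\cap\gamma=\emptyset$) and the unique lift $\tilde\tau$ of $\tau$ that is the identity on $\{-1\}\times\mathbb R$; by the definition of a Dehn twist, up to replacing $\tau$ with $\tau^{-1}$, $\tilde\tau$ acts as $T$ on $\{1\}\times\mathbb R$.

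Writing the endpoints of $\tilde\delta$ as $(-1,d_{0})$ and $(1,d_{1})$, the image arc $\tilde\tau^{2}\cdot\tilde\delta$ runs from $(-1,d_{0})\in\overline{S_{0}}$ to $(1,d_{1}+2)\in\overline{S_{2}}$, hence must intersect both $\tilde\gamma_{1}$ and $\tilde\gamma_{2}$. This already establishes that $\gamma\cup\tau^{2}\cdot\delta$ is connected. Now let $r_{2}$ be the smallest parameter value at which $\tilde\tau^{2}\cdot\tilde\delta$ meets $\tilde\gamma_{2}$, let $r_{1}<r_{2}$ be the largest parameter value at which it meets $\tilde\gamma_{1}$, and set $\eta:=(\tilde\tau^{2}\cdot\tilde\delta)|_{[r_{1},r_{2}]}$. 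The extremality of $r_{1},r_{2}$, together with the fact that $\tilde\tau^{2}\cdot\tilde\delta$ meets $\partial\tilde A$ only at its global endpoints, forces the interior of $\eta$ to lie in $\mathrm{int}(S_{1})$.

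Because $\pi$ restricts to a homeomorphism on $\mathrm{int}(S_{1})$, the projection $\pi(\eta)$ is a simple arc in $A$ with endpoints $p_{1},p_{2}\in\gamma$ and interior disjoint from $\gamma$. Concatenating $\pi(\eta)$ with the subarc $\gamma'$ of $\gamma$ from $p_{1}$ to $p_{2}$ (degenerate to a point when $p_{1}=p_{2}$) yields a simple closed curve $C\subseteq\gamma\cup\tau^{2}\cdot\delta$. To see that $C$ is a core curve of $A$, lift it starting at $P_{1}:=(\tilde\tau^{2}\cdot\tilde\delta)(r_{1})\in\tilde\gamma_{1}$: the lift first follows $\eta$ to $P_{2}:=(\tilde\tau^{2}\cdot\tilde\delta)(r_{2})\in\tilde\gamma_{2}$, then continues along a subarc of $\tilde\gamma_{2}$ to $T(P_{1})\in\tilde\gamma_{2}$, so its endpoints differ by the deck generator $T$. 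Hence $C$ represents a generator of $\pi_{1}(A)$ and is a core curve of $A$.

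The step I expect to require the most care is verifying that $\eta$ genuinely stays within a single strip $\mathrm{int}(S_{1})$, which is what makes $\pi(\eta)$ a simple arc meeting $\gamma$ only at its endpoints. Once that is established, the rest of the argument amounts to a straightforward winding-number computation in the universal cover.
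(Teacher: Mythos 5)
The paper states this Observation without any proof, treating it as evident, so there is nothing in the source to compare your argument against. Your universal-cover argument is correct and essentially optimal: the squared twist shifts the lifted endpoint of $\tilde\delta$ by two deck translations, forcing $\tilde\tau^{2}\cdot\tilde\delta$ to cross both $\tilde\gamma_{1}$ and $\tilde\gamma_{2}$; extracting the sub-arc between the last pre-$r_{2}$ crossing of $\tilde\gamma_{1}$ and the first crossing of $\tilde\gamma_{2}$ (the phrase ``largest parameter at which it meets $\tilde\gamma_{1}$'' should of course be read as ``largest such parameter below $r_{2}$,'' since the arc may revisit $\tilde\gamma_{1}$ after passing $\tilde\gamma_{2}$), projecting, and capping off along $\gamma$ produces an embedded closed curve whose lift terminates one deck translate away from its starting point, hence a core curve. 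This also makes transparent why a single power of $\tau$ would not suffice: one guaranteed crossing of $\gamma$ merely produces a wedge of four arcs with no embedded cycle, whereas two crossings sharing a strip $S_{1}$ let you close up. One bookkeeping remark: the paper's displayed formula $(s,t)\mapsto(s,t+s)$ on $I\times S^{1}$ with $I=[-1,1]$, read literally, lifts to a shift by $T^{2}$ on one boundary after normalizing to the identity on the other; your argument assumes the standard single shift $T$, which is surely the intended convention, and a larger shift would only make the conclusion easier, so nothing is affected.
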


      \paragraph{\bf{Embedded graphs}} By an embedded graph $\Gamma$ we mean a finite tuple of arcs in $X$ such that for distinct $\gamma,\gamma'$ in $\Gamma$ any intersection point of $\gamma$ and $\gamma'$ is an endpoint of both and no two distinct $\gamma,\gamma'$ can have two endpoints in common. For convenience our notation will often treat $\Gamma$ as a mere set.
        
        We let $V(\Gamma)$ be the set of endpoints of $\Gamma$ in $X$. Alternatively, we say that $\Gamma$ is an embedded $\mathcal{Q}$-graph if $V(\Gamma)=\mathcal{Q}$. We write $\bigcup\Gamma=\bigcup_{\gamma\in\Gamma}\gamma$. We will refer to any neighbourhood of $\bigcup\Gamma$ simply as a neighbourhood of $\Gamma$. The group $G$ acts on the collection of embedded graphs by post-composition.
        
        Given embedded graphs $\Gamma=(\gamma_{i})_{i=1}^{k}$ and $\Gamma'=(\gamma'_{i})_{i=1}^{k'}$ we write
        $\Gamma\simeq\Gamma'$ if $k=k'$ and $\gamma'_{j}$ is an order preserving reparametrization of $\gamma'_{j}$ for $1\leq j\leq k$.
        
        We denote by $H_{\cup\Gamma}$ the subgroup consisting of all the elements of $H$ fixing $\bigcup\Gamma$ and by
        $H_{[\Gamma]}$ the subgroup of $H$ consisting of all $h\in H$ such that $h\cdot\Gamma\simeq\Gamma$.
        
        Given a neighbourhood $U$ of $\Gamma$ and a homeomorphic embedding $h:U\to X$ preserving the arcs of $\Gamma$ and fixing their endpoints we say that $h$ is orientation preserving at $\Gamma$ if for any $p\in\gamma$ there exists some homeomorphic embedding $\tilde{\gamma}:I \times I\to X$ with $\gamma=\tilde{\gamma}_{\restriction \{0\}\times I}$ and some neighbourhood $V$ of $p$ such that
        $V,h(V)\subseteq im(\tilde{\gamma})$ and if we let $A_{0}=\tilde{\gamma}([-1,0)\times I)$, $A_{1}=\tilde{\gamma}((0,1]\times I)$ then if $C\subseteq A_{i}$ for a component $C$ of $V\setminus\gamma$ then also $h(C)\subseteq A_{i}$.
        
        We denote by $H^{+}_{\cup\Gamma}$ and $H^{+}_{[\Gamma]}$ the subgroups of $H^{+}_{\cup\Gamma}$ and $H^{+}_{[\Gamma]}$ respectively consisting of those elements that are orientation preserving at $\Gamma$.\\
        
        
      \paragraph{\bf{Extending partial homeomorphisms}} The following facts can be seen as a consequence of Sch\"onflies theorem, the Jordan curve theorem and the classification of compact surfaces.
        
        \begin{fact}
        \label{f: transitive on disks} For any two families of disjoint embedded disks $\{D_{i}\}_{i=1}^{k},\{D'_{i}\}_{i=1}^{k}$ in some connected surface $Y$ and any collection of homeomorphisms $h_{i}:\partial D_{i}\cong \partial D'_{i}$, which we assume to be orientation-preserving if $Y$ is orientable, there exists $h\in \H_{c0}(Y)$ taking $D_{i}$ to $D'_{i}$ and restricting to $h_{i}$ on $\partial D_{i}$ for $1\leq i\leq k$.
        \end{fact}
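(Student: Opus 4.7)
\medskip
\noindent\textbf{Plan.} The argument splits naturally into two stages: first exhibit an element of $\H_{c0}(Y)$ sending the family $\{D_i\}$ to $\{D'_i\}$ as subsets, and then correct the induced boundary parametrisations.

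For the first stage, I would induct on $k$. The base case $k=1$ is the classical homogeneity assertion that any two embedded disks $D,D'$ in a connected surface are related by a compactly supported ambient homeomorphism: one picks a simple arc $\alpha$ joining a point of $\partial D$ to a point of $\partial D'$ whose interior is disjoint from $D\cup D'$, observes that a regular neighbourhood of $D\cup\alpha\cup D'$ is either a disk or a M\"obius band, and applies Sch\"onflies inside that neighbourhood together with Fact~\ref{f: alexander} to push $D$ onto $D'$. For the inductive step, $Y\setminus(D'_1\cup\cdots\cup D'_{i-1})$ remains a connected surface (removing finitely many disjoint closed disks from a connected surface leaves it connected), so applying the base case inside it yields a compactly supported homeomorphism moving $D_i$ onto $D'_i$ whose support is disjoint from the already-placed disks.

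For the second stage I may assume $D_i=D'_i$ and treat each $i$ separately, since the resulting pieces have disjoint supports. Using that $\partial D_i$ is bicollared in $Y$, I would enlarge $D_i$ to a slightly larger disk $D_i^+$ with $D_i\subseteq int(D_i^+)$ and build a self-homeomorphism $\tilde h_i$ of $D_i^+$ which equals the identity on $\partial D_i^+$, the cone extension of $h_i$ on $D_i$, and interpolates between the two on the collar annulus $D_i^+\setminus int(D_i)$. This interpolation exists precisely because $h_i$ is isotopic to the identity in $\H(S^1)$, which in the orientable case is exactly the given orientation hypothesis; in the non-orientable case with an orientation-reversing $h_i$ one first conjugates by a homeomorphism supported along an orientation-reversing loop of $Y$ based near $\partial D_i$, thereby reducing to the orientation-preserving situation. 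Extending $\tilde h_i$ by the identity outside $D_i^+$ produces a self-homeomorphism of $Y$ supported on the disk $D_i^+$, which by Fact~\ref{f: alexander} lies in $\H_{c0}(Y)$; the product over $i$ is the required $h$.

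The main obstacle is the base case $k=1$ of the first stage, and specifically the subcase where the regular neighbourhood of the connecting arc turns out to be a M\"obius band rather than a disk. Once this case is handled by a direct Sch\"onflies argument inside the M\"obius chart, the rest of the proof is a routine assembly of Sch\"onflies, Alexander's lemma, and bicollar extensions.
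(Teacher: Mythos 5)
Since the paper states this as a standing fact with only a pointer to Sch\"onflies, the Jordan curve theorem and the classification of surfaces, there is no proof to compare with; your two-stage plan (move the disks as subsets, then correct the boundary parametrisations) is the standard route and its substance is correct. The difficulty you single out as the main obstacle, however, is illusory: a regular neighbourhood of $D\cup\alpha\cup D'$ is always a disk and never a M\"obius band, because $D\cup\alpha\cup D'$ is contractible --- two $0$-handles joined by one $1$-handle has Euler characteristic $1$, and the only compact connected surface with boundary of Euler characteristic $1$ is the disk --- irrespective of the orientability of $Y$. A M\"obius band genuinely intervenes only in your corrective step for an orientation-reversing $h_{i}$ in a non-orientable $Y$: there a regular neighbourhood of $D_{i}$ together with an orientation-reversing loop based on $\partial D_{i}$ is a M\"obius band, inside which one realises a self-homeomorphism fixing $D_{i}$ set-wise and reversing $\partial D_{i}$. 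Note also that at that point you want to compose with, not conjugate by, that push: pre-composing the stage-one homeomorphism with such a $\sigma$ renders the residual boundary correction orientation-preserving, hence extendable across the collar annulus. One further small point: your base case $k=1$ tacitly uses $D\cap D'=\emptyset$ so that the connecting arc $\alpha$ exists with interior avoiding both; since the hypotheses only make the $D_{i}$ pairwise disjoint and the $D'_{i}$ pairwise disjoint, and allow $D_{i}\cap D'_{j}\neq\emptyset$, you should first shrink $D$ inside a chart to a small disk disjoint from $D'$ and then run the arc-push argument. With these repairs the proposal is complete.
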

        
        \begin{fact}
        \label{f: arcs within disk}Let $D$ be a disk and $\{\alpha_{i}\}_{i=1}^{k},\{\alpha'_{i}\}_{i=1}^{k}$ two families of pair-wise disjoint arcs between points in $\partial D$  such that either $\alpha'_{i},\alpha_{i}\subseteq\mathring{D}$ for all $i$ or the following holds
        \begin{itemize}
        \item for all $1\leq i\leq k$ the intersection of
        $\alpha_{i}$ with $\partial D$ consists of one or two endpoints $\alpha_{i}$ and the same is true for $\alpha'_{i}$
        \item if  $\alpha_{i}(-1)\in\partial D$, then $\alpha'_{i}(-1)=\alpha_{i}(-1)$ and the same is true if we replace $-1$ with $1$ and/or exchange the roles of $\alpha_{i}$ and $\alpha'_{i}$
        \end{itemize}
        Then there exists some $h\in \H(D,\partial D)$ such that $h\circ\alpha_{i}=\alpha'_{i}$ for all $i$.
        In particular, for any arc $\alpha$ and any embedded disk $D$ with $\alpha\subseteq\mathring{D}$ we have that $D$ is the image of a rectangular extension of $\alpha$.
        \end{fact}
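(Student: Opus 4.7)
The plan is to prove the main statement by induction on $k$, with the base case $k=1$ containing the geometric content (a cut-and-glue argument) and the inductive step proceeding by first mapping $\alpha_1$ to $\alpha'_1$ and then applying the induction hypothesis inside a collar complement.

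For the base case $k=1$, I would first handle the sub-case where $\alpha_1$ and $\alpha'_1$ both have both endpoints on $\partial D$ (and therefore share these endpoints by hypothesis). The arc $\alpha_1$ separates $D$ into two closed disks $D^+, D^-$ meeting along $\alpha_1$, and $\alpha'_1$ analogously separates $D$ into $D'^+, D'^-$, with $D^{\pm}\cap\partial D = D'^{\pm}\cap\partial D$ after suitable labelling. Fixing a reparametrization $f\colon \alpha_1\to\alpha'_1$, the map $\partial D^\pm\to\partial D'^\pm$ given by the identity on $\partial D\cap D^\pm$ and by $f$ on $\alpha_1$ is a homeomorphism of circles, and one checks that a single orientation choice for $f$ makes both homeomorphisms orientation-preserving (because traversing $\partial D^+$ and $\partial D^-$ with the boundary orientation induced by a fixed orientation of $D$ runs through $\alpha_1$ in opposite directions, and the same happens on the primed side). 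Fact \ref{f: transitive on disks} applied inside $D$ to the pairs $\{D^+,D^-\}$ and $\{D'^+,D'^-\}$ then produces the desired $h\in\H(D,\partial D)$.

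The remaining sub-cases of the base case (one or two interior endpoints) are reduced to the previous one by extending $\alpha_1$ and $\alpha'_1$ beyond their interior endpoints via auxiliary arcs into $\mathring D$ ending at chosen points of $\partial D$, the key point being that one can choose the tails for the two families to share their $\partial D$-endpoints (independently on each interior endpoint). One then parametrizes $f$ so that the sub-arc corresponding to $\alpha_1$ maps onto the sub-arc corresponding to $\alpha'_1$.

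For the inductive step, apply the base case to produce $h_1\in\H(D,\partial D)$ with $h_1\cdot\alpha_1=\alpha'_1$, and replace $\alpha_i$ by $h_1\cdot\alpha_i$ so that $\alpha_1=\alpha'_1$. Pick a thin tubular neighborhood $U$ of $\alpha_1$ in $D$ disjoint from all remaining $\alpha_i$ and $\alpha'_i$ (possible by compactness and pairwise disjointness within each family), so that $D\setminus U$ is a disjoint union of closed disks $D_1,\ldots,D_m$ with $m\in\{1,2\}$ depending on the endpoint configuration of $\alpha_1$. The remaining arcs land in specific $D_j$'s, paired up by hypothesis, and the induction hypothesis inside each $D_j$ produces a boundary-fixing homeomorphism that we glue with the identity on $U$. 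For the ``in particular'' clause, apply the main statement with $k=1$, $\alpha_1=\alpha$, and $\alpha'_1$ the image of $\{0\}\times[-\tfrac12,\tfrac12]$ under any parametrization $I\times I\to D$; pulling that parametrization back by the resulting $h\in\H(D,\partial D)$ yields a rectangular extension of $\alpha$ with image $D$. The main obstacle I anticipate is not really conceptual but bookkeeping: handling the interior-endpoint sub-cases of $k=1$ while making sure the auxiliary tails can be chosen disjoint from $\alpha_1,\alpha'_1$ and with matching $\partial D$-endpoints, and ensuring in the inductive step that the remaining arcs truly lie in the interior of $D\setminus U$ (which requires taking $U$ thin enough after the arcs have been listed).
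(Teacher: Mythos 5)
The paper does not actually prove this statement---it is listed as a ``Fact'' attributed to Sch\"onflies, the Jordan curve theorem and the classification of surfaces---so there is no paper proof to compare against, but your cut-and-glue induction is the natural way to fill the gap and is essentially the right idea. Two things need repair. In the base case you invoke Fact \ref{f: transitive on disks} for the pair $\{D^+,D^-\}$ (and $\{D'^+,D'^-\}$), but that fact is stated for families of \emph{disjoint} disks, and $D^+\cap D^-=\alpha_1$. The standard substitute is to extend the boundary homeomorphism $\partial D^{\pm}\to\partial D'^{\pm}$ across each half-disk separately by the Alexander/Sch\"onflies extension property for disks, and then glue along $\alpha_1$, where both restrict to $f$ by construction.

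More substantively, your inductive step implicitly assumes the second alternative of the hypothesis. If $\alpha_1=\alpha'_1\subseteq\mathring D$, which is exactly the situation invoked in the ``in particular'' clause, a regular neighbourhood $U$ of $\alpha_1$ is a disk contained in $\mathring D$ and $D\setminus U$ is an \emph{annulus}, not one or two disks, so your claim $m\in\{1,2\}$ is wrong there and the induction hypothesis cannot be applied to the pieces. This is a genuine gap, not mere bookkeeping. It is fixable: either first reduce the all-interior alternative to the boundary-endpoint alternative by attaching to each $\alpha_i$ and $\alpha'_i$ pairwise disjoint tails reaching $\partial D$ with matching boundary endpoints for the two families, checking via the Jordan curve theorem that the non-crossing pattern on $\partial D$ produced for the $\alpha_i$ can be realised for the $\alpha'_i$ as well; or cut the annulus $D\setminus U$ along an auxiliary boundary-to-boundary arc disjoint from every remaining $\alpha_i$ and $\alpha'_i$, which exists because removing finitely many arcs from a connected surface leaves it connected. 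Either patch requires an argument that the proposal does not supply.
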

        In particular, for any arc $\alpha$ and any embedded disk $D$ with $\alpha\subseteq\mathring{D}$ we have that $D$ is the image of a rectangular extension of $\alpha$.
        
        \begin{fact}
        \label{f: extensions around branching points}Let $D$ be a disk and $\Gamma$ some embedded $\mathcal{Q}$-graph such that $\bigcup\Gamma$ is simply connected and $\bigcup\Gamma\cap\partial D$ coincides with the set of points in $\mathcal{Q}$ belonging to a unique $\gamma\in\Gamma$. Then any homeomorphic embedding $h$ of $\bigcup\Gamma\cup\partial D$ into $D$ that is the identity on $\partial D$ extends to a homeomorphism of $D$.
        \end{fact}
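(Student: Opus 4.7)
The plan is to cut $D$ along $\bigcup\Gamma$ into sub-disks and extend $h$ over each piece separately via Sch\"onflies.

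The first step is to show that $D\setminus\bigcup\Gamma$ has finitely many components $U_1,\dots,U_r$ whose closures are topological disks. The hypothesis places every leaf of the tree $\bigcup\Gamma$ on $\partial D$ and keeps every internal vertex and every edge interior strictly inside $D$. I would argue by induction on the number of arcs of $\Gamma$: when $\Gamma$ is a single arc with both endpoints on $\partial D$, Sch\"onflies directly splits $D$ into two disks; otherwise I would peel off a pendant arc and invoke the inductive hypothesis, tracking how reinserting the removed arc subdivides one of the resulting disks into two. The upshot is that $r$ equals the number of leaves of $\Gamma$ and that for each $i$ one has $\partial\overline{U_i}=\beta_i\cup\sigma_i$, where $\beta_i\subseteq\partial D$ is the arc between two consecutive leaves in the cyclic order on $\partial D$ and $\sigma_i$ is the unique simple path in $\bigcup\Gamma$ joining them.

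Since $h$ is the identity on $\partial D$, the image $h(\bigcup\Gamma)$ is an embedded tree meeting $\partial D$ at exactly the same points as $\bigcup\Gamma$ and with the same combinatorial type; applying the first step to it yields a decomposition $D\setminus h(\bigcup\Gamma)=U'_1\sqcup\dots\sqcup U'_r$, which I index so that $\partial\overline{U'_i}\cap\partial D=\beta_i$. The rule ``identity on $\beta_i$, $h$ on $\sigma_i$'' then defines a homeomorphism $\partial\overline{U_i}\to\partial\overline{U'_i}$ (agreement at the two endpoints of $\beta_i$ is automatic because they lie in $\partial D\cap\bigcup\Gamma$, where $h$ is the identity), and Sch\"onflies, as packaged in Fact~\ref{f: transitive on disks}, extends this boundary map to a homeomorphism $\tilde h_i:\overline{U_i}\to\overline{U'_i}$.

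Pasting the $\tilde h_i$ together produces the desired extension $\tilde h:D\to D$: continuity along $\bigcup\Gamma$ holds because on each arc shared by two regions the two extensions coincide with $h$; surjectivity is immediate from $\bigcup_i\overline{U'_i}=D$; injectivity follows because the $\overline{U_i}$'s (and the $\overline{U'_i}$'s) overlap only inside $\bigcup\Gamma$ (resp.\ $h(\bigcup\Gamma)$), where $h$ is injective. A continuous bijection from a compact Hausdorff space to itself is a homeomorphism. The main obstacle is the inductive decomposition in Step~1---in particular, handling leaves attached to internal vertices of high valence and verifying that the boundary of each complementary region really is a simple closed curve bounding an open disk in $D$. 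This is classical planar topology, and the remainder of the argument is bookkeeping combined with Sch\"onflies.
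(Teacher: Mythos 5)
The paper offers no proof of this Fact---it is asserted as a consequence of Sch\"onflies, the Jordan curve theorem, and the classification of compact surfaces---so there is no official argument to compare against, but your cut-along-the-tree strategy is the natural one and the overall plan is sound. You cut $D$ along the embedded tree $\bigcup\Gamma$ into closed sub-disks indexed by the boundary arcs between consecutive leaves, note that $h(\bigcup\Gamma)$ is another embedded tree with the same leaves (since $h$ fixes $\partial D$ pointwise) so it yields a parallel decomposition, match the two decompositions via their boundary arcs $\beta_i$, extend over each matched pair of sub-disks by Sch\"onflies, and glue; uniqueness of simple paths in a tree is exactly what makes the rule ``identity on $\beta_i$, $h$ on $\sigma_i$'' consistent, i.e.\ guarantees $h(\sigma_i)=\sigma_i'$.

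There is one concrete slip in the inductive step of Step 1. If the pendant arc $\gamma$ you peel off has its interior endpoint $v$ of degree exactly two in $\Gamma$, then in $\Gamma\setminus\{\gamma\}$ the vertex $v$ has degree one yet sits in the interior of $D$, so the smaller graph violates the standing hypothesis that all degree-one vertices lie on $\partial D$ and the inductive hypothesis cannot be applied. (Note this is the opposite of the ``high valence'' case you flagged as the danger spot---branch vertices of large degree cause no trouble, since the edge incident to the peeled leaf still has an endpoint of degree $\geq 2$ after removal.) The repair is routine: first replace the given arc decomposition of $\bigcup\Gamma$ by the one in which consecutive arcs meeting at a degree-two interior vertex are concatenated, so that every interior vertex has degree at least three; then every pendant arc has its interior endpoint dropping to degree at least two upon removal and the induction closes. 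Equivalently, induct on the number of leaves, deleting at each step the entire pendant path from a branch vertex to a leaf. With that adjustment, the remaining verifications you list---each complementary region is a closed disk bounded by a simple closed curve made of one boundary arc and one simple path in the tree, and the glued map is a continuous bijection of a compact Hausdorff space, hence a homeomorphism---go through as you say.
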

        
      \paragraph{\bf{Point pushing maps}}
        Given an arc $\alpha$ from a point $p\in X$ to a point $q\in X$ and $\epsilon>0$ we denote by $\p_{\epsilon}(\alpha)$ the collection of all homeomorphisms that take $p$ to $q$ and are supported in some embedded disk $D$ with $\alpha\subseteq\mathring{D}\subseteq N_{\epsilon}(\alpha)$.
        The existence of rectangular extensions and Fact \ref{f: transitive on disks} implies that $\p_{\epsilon}(\alpha)\neq\emptyset$ for any $\epsilon>0$ and by Fact \ref{f: alexander} $\p_{\epsilon}(\alpha)\subseteq H$.
        If $\alpha$ is a regular path and $U$ an open set containing $im(\alpha)$ we let
        $\p_{\epsilon}(\alpha)$ be the collection of all products of the form $f_{k}f_{k-1}\dots f_{1}$, where
        $f_{i}\in\p_{\epsilon}(\alpha_{i})$ and
        $\alpha=\alpha_{1}*\alpha_{2}\dots \alpha_{k}$ for some decomposition of $\alpha$ into arcs. \\
        
      \paragraph{\bf{Bigons}}
        
        The following follows from a similar argument to that in the proof of 1.7 in \cite{farb2011primer}.
        \begin{fact}
        \label{f: bigon removals} Let $Y$ be a compact surface with boundary and let $\{\gamma_{i}\}_{i=1}^{k}$, $\{\gamma'_{i}\}_{i=1}^{r}$ be two families of pair-wise disjoint arcs in $Y$ between boundary points with
        $\gamma_{i}(\mathring{I}),\gamma'_{j}(\mathring{I})\subseteq int(Y)$. Assume that for each $1\leq i\leq k$ and $1\leq j\leq r$ the arcs $\gamma_{i}$ and $\gamma'_{j}$ are transverse on $int(Y)$ and some representative of the homotopy class of $\gamma'_{j}$ is disjoint from $\gamma_{i}$ on $int(Y)$.
        Then there is an innermost bigon bounded by $\Gamma$ and $\Gamma'$: an embedded disk $D$ in $Y$ whose boundary is the union of a subarc of some $\gamma_{i}$ and a subarc of some $\gamma'_{j}$ and whose interior is disjoint from all $\gamma_{i}$ and $\gamma'_{j}$.
        \end{fact}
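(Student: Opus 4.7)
The natural approach is the classical bigon-criterion argument, adapted to families of arcs with endpoints on $\partial Y$. First, pick a pair $(i_0,j_0)$ with $\gamma_{i_0}\cap\gamma'_{j_0}\neq\emptyset$ (if no such pair exists there is nothing to produce) and pass to the universal cover $\pi\colon\tilde Y\to Y$. Since $Y$ is a compact surface with boundary, $\tilde Y$ embeds in the plane as a simply-connected planar surface. Lift $\gamma_{i_0}$ to a proper arc $\tilde\gamma\subset\tilde Y$; the hypothesis that $\gamma'_{j_0}$ is homotopic rel endpoints to an arc disjoint from $\gamma_{i_0}$ in $int(Y)$ lets us choose a lift $\tilde\gamma'$ of $\gamma'_{j_0}$ whose endpoints coincide in $\tilde Y$ with those of a lift of the disjoint representative. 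Lifting the homotopy, $\tilde\gamma'$ is then homotopic rel endpoints in $\tilde Y$ to an arc disjoint from $\tilde\gamma$. The algebraic intersection of $\tilde\gamma$ and $\tilde\gamma'$ in this simply-connected planar surface is therefore $0$, and since they are transverse, $|\tilde\gamma\cap\tilde\gamma'|$ is even; in particular, some geometric intersection of $\gamma_{i_0}$ and $\gamma'_{j_0}$ downstairs lifts to at least two crossings upstairs.

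Let $\tilde p,\tilde q$ be two intersection points consecutive along $\tilde\gamma$. The concatenation of the subarcs of $\tilde\gamma$ and $\tilde\gamma'$ between them is a Jordan curve, bounding a closed topological disk $\tilde D\subset\tilde Y$ by the Jordan curve theorem. I would then replace $\tilde D$ by one which is innermost, i.e. minimizing the cardinality of $int(\tilde D)\cap\pi^{-1}(\bigcup_i\gamma_i\cup\bigcup_j\gamma'_j)$ over all valid choices of $(i,j)$, lifts, and consecutive pairs of crossings. Minimality forces this intersection to be empty: any proper arc lift entering $int(\tilde D)$ would have to exit, cutting $\tilde D$ into smaller regions, one of which is a strictly smaller bigon candidate contradicting the choice.

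Finally, set $D=\pi(\tilde D)$. Injectivity of $\pi|_{\partial\tilde D}$ follows from the embeddedness of $\gamma_{i_0}$ and $\gamma'_{j_0}$. If two interior points of $\tilde D$ were identified by $\pi$, some non-trivial deck translate $g$ would give $g\tilde D\cap\tilde D\neq\emptyset$ with $g\tilde D\neq\tilde D$ (the deck action is free); then $g\partial\tilde D$, which lies in the preimage of $\gamma_{i_0}\cup\gamma'_{j_0}$, would meet $int(\tilde D)$, contradicting innermostness. Hence $D$ is an embedded disk bounded by a subarc of $\gamma_{i_0}$ and a subarc of $\gamma'_{j_0}$, with interior disjoint from every $\gamma_i$ and $\gamma'_j$ — the desired innermost bigon in $Y$. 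The main technical obstacle is the lift-matching step at the outset: one must set up the universal cover and the correspondence between endpoints on $\partial Y$ and boundary components of $\tilde Y$ carefully enough that the disjoint-in-homotopy-class hypothesis genuinely produces a homotopy of the chosen lifts rel endpoints, so that the crucial even-intersection count in the plane is available.
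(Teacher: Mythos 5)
Your approach matches what the paper intends: it simply cites the Farb--Margalit bigon-criterion proof, which is exactly the universal-cover / lift-and-separate argument you reconstruct, so this is the same route. The substance is correct, but tighten two points. First, $int(\tilde D)\cap\pi^{-1}(\bigcup_i\gamma_i\cup\bigcup_j\gamma'_j)$ is a $1$-complex, infinite whenever non-empty, so you cannot minimize its cardinality; minimize instead the number of its connected components, and observe that any offending arc segment entering $int(\tilde D)$ must exit through the same side of $\partial\tilde D$ it entered (distinct lifts of the pairwise-disjoint $\gamma_i$ never meet, and likewise for the $\gamma'_j$), so it genuinely cuts off a sub-bigon of the required type with strictly fewer components. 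Second, you should choose $\tilde\gamma$ and $\tilde\gamma'$ to pass through a common lift of an actual crossing before invoking the even-intersection count, otherwise the chosen lifts may simply be disjoint; and, as you flag, the parity argument requires taking the homotopy class of $\gamma'_j$ rel endpoints --- with free boundary-sliding homotopy the statement is false (two transversally crossing diameters of a disk have no bigon). In the paper's application the $\gamma'_j$ are images of the $\gamma_j$ under a homeomorphism isotopic to the identity rel $\partial Y$, so this rel-endpoints reading is the intended one.
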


\section{Extending partial homeomorphisms around graphs}
  \label{s: extension lemma}
  \begin{definition}
  Let $\Gamma$ be an embedded $\mathcal{Q}$-graph. By a nice system of disks around $\Gamma$ we mean a tuple $\mathcal{N}=(\{D_{q}\}_{q\in\Q},\{D_{\gamma}\}_{\gamma\in\Gamma},\{\theta_{\gamma}\}_{\gamma\in\Gamma})$ where $D_{q}$ and $D_{\gamma}$ are embedded disks and $\theta_{\gamma}:I\times I\cong D_{\gamma}$ such that
  \begin{itemize}
  \item $\{D_{q}\}_{q\in\mathcal{Q}}$ are pair-wise disjoint and $\{D_{\gamma}\}_{\gamma\in\Gamma}$ are pair-wise disjoint
  \item $D_{q}\cap D_{\gamma}$ is an arc if $q$ is an endpoint of $\gamma$ for $q\in\mathcal{Q}$ and $\gamma\in\Gamma$ and empty otherwise
  \item  $\theta_{\gamma}$ takes $\{0\}\times I$ to $\gamma\cap D_{\gamma}$ and $I\times\{-1\}$ and $I\times\{1\}$ to $D_{\gamma(-1)}\cap D_{\gamma}$ and $D_{\gamma(1)}\cap D_{\gamma}$ respectively.
  \end{itemize}
  and moreover for any arc arc $\gamma\in\Gamma$ from $q$ to $q'$ we have that
  \begin{itemize}
  \item $\gamma$ can be divided into three consecutive subarcs: $\gamma=\gamma_{q}*\gamma'*\gamma_{q'}$
  with $\gamma_{q}\subseteq D_{q}$, $\gamma'\subseteq D_{\gamma}$ and $\gamma_{q'}\subseteq D_{q'}$
  \item $\gamma$ intersects $\partial D_{q}$ only at the common endpoint of $\gamma_{q}$ and $\gamma'$, which lies in the interior of $D_{q}\cap D_{\gamma}$, and similarly for $q'$
  \end{itemize}
  We write $\bigcup\mathcal{N}=\bigcup_{q\in\mathcal{Q}}D_{q}\cup\bigcup_{\gamma\in\Gamma}D_{\gamma}$. It is easy to see that $\bigcup\mathcal{N}$ is a neighbourhood of $\bigcup\Gamma$. We will refer to any set of this form as a nice neighbourhood of $\Gamma$.
  \end{definition}
  
  \begin{definition}
  \label{d: star intersection}In particular, in the situation above each $D_{q}$ intersects each arc $\gamma\in\Gamma$ in either the empty set or a single arc from $q$ to $\partial D_{q}$ touching $\partial D_{q}$ in a single point. We will henceforth refer to any disk satisfying this condition for some $q\in V(\Gamma)$ as intersecting $\Gamma$ in a star\footnote{Note that this allows for the possibility that the disk in question intersects $\Gamma$ in just one or two arcs.} .
  \end{definition}
  
  \begin{remark}
  Given two collection of disks as in the definition, a collection of homeomorphisms $\{\theta_{\gamma}\}_{\gamma\in\Gamma}$ always exists.
  \end{remark}

  The following follows from the Jordan curve theorem and Sch\"onflies theorem by standard arguments:
  \begin{fact}
  \label{f: nice neighbourhoods}Let $\Gamma$ be an embedded $\Q$-graph. Then for any $\epsilon>0$ there is a nice system of disks
  $\mathcal{N}=(\{D_{q}\}_{q\in\Q},\{D_{\gamma}\}_{\gamma\in\Gamma},\{\theta_{\gamma}\}_{\gamma\in\Gamma})$ such that
  $\bigcup\mathcal{N}\subseteq N_{\epsilon}(\bigcup\Gamma)$ and $diam(D_{q})<\epsilon$ for all $q\in\Q$.
  \end{fact}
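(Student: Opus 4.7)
The plan is to first construct the vertex disks $\{D_{q}\}_{q\in\Q}$ and then build the edge disks $\{D_{\gamma}\}_{\gamma\in\Gamma}$ by truncating rectangular extensions around what remains of each arc after cutting along the vertex disks. For each vertex $q\in\Q$, enumerate the arcs of $\Gamma$ incident at $q$ as $\gamma^{(1)},\ldots,\gamma^{(n_{q})}$. Using a chart identifying a neighborhood of $q$ with $\mathbb{R}^{2}$, together with Sch\"onflies and the absence of wild arcs in surfaces (via the references in the preliminaries), one can find a small disk neighborhood $D_{q}$ of $q$ inside which each $\gamma^{(i)}\cap D_{q}$ appears as a single embedded arc from $q$ to $\partial D_{q}$ transverse to $\partial D_{q}$ at a single point, and the various $\gamma^{(i)}\cap D_{q}$ are pairwise disjoint off $q$. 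Exploiting finiteness of $\Q$ and $\Gamma$, I may by further shrinking simultaneously arrange: $\mathrm{diam}(D_{q})<\epsilon$; $D_{q}\subseteq N_{\epsilon}(\bigcup\Gamma)$; $D_{q}$ is disjoint from every arc of $\Gamma$ not incident at $q$; and the $D_{q}$'s are pairwise disjoint. This yields the star intersection property of Definition \ref{d: star intersection}.

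Fix now an edge $\gamma\in\Gamma$ from $q$ to $q'$, and let $\bar\gamma\subseteq\gamma$ be a closed sub-arc containing $\gamma\setminus(\mathring{D}_{q}\cup\mathring{D}_{q'})$ and extending slightly into the interiors of $D_{q}$ and $D_{q'}$. By Fact \ref{f: arcs within disk}, choose a rectangular extension $\tilde\sigma\colon I\times I\to X$ with $\bar\gamma=\tilde\sigma(\{0\}\times[-a,a])$ for some $a\in(0,1)$. Restricting $\tilde\sigma$ to a thin strip $I\times[-\eta,\eta]$ and reparametrizing produces an embedded disk $R_{\gamma}$. A compactness argument, using that $\bar\gamma$ has positive distance from $\bigcup\Gamma\setminus\gamma$ and from $D_{q''}$ whenever $q''\notin\{q,q'\}$, shows that for $\eta$ small enough $R_{\gamma}$ avoids every arc in $\Gamma\setminus\{\gamma\}$ and every non-incident vertex disk, and moreover meets $\partial D_{q}$ and $\partial D_{q'}$ transversally in single arcs crossing $\gamma$ (using that $\gamma$ enters $\partial D_{q}$ transversally at a single point by the first step). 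Truncating $R_{\gamma}$ so that its two short sides coincide with $R_{\gamma}\cap\partial D_{q}$ and $R_{\gamma}\cap\partial D_{q'}$ yields $D_{\gamma}$, and the corresponding restriction of $\tilde\sigma$ supplies $\theta_{\gamma}\colon I\times I\cong D_{\gamma}$ with $\theta_{\gamma}(\{0\}\times I)=\gamma\cap D_{\gamma}$ and $\theta_{\gamma}(I\times\{\pm 1\})=D_{\gamma}\cap D_{q^{(\pm)}}$. Carrying this out with a uniform $\eta>0$ bounded above by $\epsilon$ and by the minimum pairwise distance between the finitely many $\bar\gamma$'s enforces pairwise disjointness of $\{D_{\gamma}\}_{\gamma\in\Gamma}$ and the inclusion $\bigcup\mathcal{N}\subseteq N_{\epsilon}(\bigcup\Gamma)$.

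The main obstacle I anticipate is ensuring coherence at the vertices: for every edge $\gamma$ incident at $q$, the intersection $D_{\gamma}\cap D_{q}$ must be a single arc lying on $\partial D_{\gamma}$ in precisely the position $\theta_{\gamma}(I\times\{-1\})$ (or $I\times\{1\}$), while the intersection arcs contributed by the different edges at $q$ must remain pairwise disjoint inside $\partial D_{q}$. Thinness of the rectangular extensions combined with the star structure at $q$ from the first step take care of this, provided $\eta$ is chosen smaller than half the minimum distance between the entry points of the incident edges into $\partial D_{q}$, and provided the truncation of $R_{\gamma}$ is executed precisely along $R_{\gamma}\cap\partial D_{q}$ and $R_{\gamma}\cap\partial D_{q'}$ so that the parametrization $\theta_{\gamma}$ matches the required boundary identifications.
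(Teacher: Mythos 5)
The paper gives no proof of this fact; it is stated right after the sentence ``The following follows from the Jordan curve theorem and Sch\"onflies theorem by standard arguments,'' and left at that. Your construction is a sensible concrete realization of that standard argument: build the vertex disks first so that each intersects $\Gamma$ in a star, then fatten each trimmed edge core $\bar\gamma$ into a thin rectangular strip and truncate along the two incident $\partial D_q$'s to produce $D_\gamma$ and $\theta_\gamma$. The finiteness-and-compactness bookkeeping (uniform thinness $\eta$, separation of the $\bar\gamma$'s, $D_\gamma$ avoiding non-incident vertex disks and the rest of $\Gamma$, and $\bigcup\mathcal{N}\subseteq N_{\epsilon}(\bigcup\Gamma)$) is exactly what is needed and is stated correctly.

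Two points deserve a little more care than you give them. First, the claim that a small chart at a branch vertex $q$ turns the union of the incident arcs into a standard star is stronger than the ``no wild arcs'' fact for a single arc (which is what the paper's footnote reference addresses). You need tameness of the whole $1$-complex near $q$, i.e.\ that finitely many arcs pairwise meeting only at $q$ can be simultaneously straightened. In dimension $2$ this is true and standard --- one can argue inductively with the half-plane form of Sch\"onflies, or invoke that triangulations of surfaces extend to ones having a given embedded graph as a subcomplex, as in Moise --- but it should be cited as such rather than folded into ``absence of wild arcs.'' Second, for the truncation step you should say explicitly that the arc $R_\gamma\cap\partial D_q$ runs from one long side of the thin strip to the other (not just that it is a single arc crossing $\gamma$); this is exactly what guarantees that cutting $R_\gamma$ along it yields a disk with the correct boundary decomposition for $\theta_\gamma$. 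As you note, this follows from placing yourself in the local transversality model at the crossing point and taking $\eta$ small, but it is the step that actually makes the $\theta_\gamma(I\times\{\pm1\})$ condition come out right and is worth spelling out. With those two clarifications the argument is complete.
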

  
  The Lemma below is probably known, but we were unable to find a suitable reference.
  \newcommand{\h}[0]{\frac{1}{2}}
  \begin{lemma}
  \label{l: close to graph}Given any embedded graph $\Gamma$ and $\epsilon>0$ there exists some $\delta=\delta(\epsilon,\Gamma)>0$ such that for any neighbourhood $U$ of $\Gamma$ there is some closed neighbourhood $N$ of $\Gamma$ with $N\subseteq U$ and with the property that any homeomorphic embedding $h:N\to X$ which is the identity on $\bigcup\Gamma$, orientation preserving at $\Gamma$ and satisfies $d(p,h(p))<\delta$ for all $p\in N$ extends to some $\tilde{h}\in\V_{\epsilon}\cap H$. Moreover, if there exists some union $C$ of connected components of $X\setminus\bigcup\Gamma$ such that $h$ restricts to the identity on $C\cap N$, then we may assume that $\tilde{h}$ is the identity on $C$.
  \end{lemma}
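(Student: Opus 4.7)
The plan is to construct $\tilde{h}$ as a homeomorphism supported in a small nice neighbourhood $\bigcup\mathcal{N}\subseteq U$ of $\Gamma$, with $\tilde{h}=h$ on an inner closed neighbourhood $N$ of $\bigcup\Gamma$ and $\tilde{h}=1$ on $\partial(\bigcup\mathcal{N})$. Using Fact \ref{f: nice neighbourhoods}, first fix a nice system $\mathcal{N}=(\{D_{q}\}_{q\in\Q},\{D_{\gamma}\}_{\gamma\in\Gamma},\{\theta_{\gamma}\}_{\gamma\in\Gamma})$ with $\bigcup\mathcal{N}\subseteq U$ and each disk of diameter less than $\epsilon/3$. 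Then take $N$ to be the closure of a concentrically nested thinner neighbourhood of $\bigcup\Gamma$ inside $\mathring{\bigcup\mathcal{N}}$ (obtained by replacing each $\theta_{\gamma}$-rectangle by its inner sub-rectangle $\theta_{\gamma}([-1+\eta,1-\eta]^{2})$ and each $D_{q}$ by a concentric strictly smaller disk, the shrinking done compatibly so that pairwise overlaps remain non-empty along $\bigcup\Gamma$). Pick $\delta>0$ small enough that the $\delta$-neighbourhood of $N$ lies in $\bigcup\mathcal{N}$ — this forces $h(N)\subseteq\bigcup\mathcal{N}$ automatically — and so that the damped formula below yields a homeomorphism in $\V_{\epsilon}$.

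\textbf{Disk-by-disk extension.} Declare $\tilde{h}=h$ on $N$ and $\tilde{h}=1$ off $\bigcup\mathcal{N}$; the task is to interpolate on the collar $\bigcup\mathcal{N}\setminus\mathring{N}$. On each edge disk $D_{\gamma}$ identified with $I\times I$ via $\theta_{\gamma}$, the restriction $h\circ\theta_{\gamma}$ on the inner rectangle has the form $(s,t)\mapsto(s+u(s,t),t+v(s,t))$ with $|u|,|v|=O(\delta)$, $u(0,t)=v(0,t)=0$ (since $h$ fixes $\gamma$), and preserving the half-rectangles $\{s>0\}$ and $\{s<0\}$ (orientation-preserving). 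Choosing bump functions $\lambda,\nu:I\to[0,1]$ equal to $1$ on $[-1+\eta,1-\eta]$ and vanishing at $\pm 1$, set $\tilde{h}\circ\theta_{\gamma}(s,t)=\theta_{\gamma}(s+\lambda(s)\nu(t)\hat{u}(s,t),\,t+\lambda(s)\nu(t)\hat{v}(s,t))$, where $\hat{u},\hat{v}$ are the extensions of $u,v$ to $I\times I$ obtained by clamping each coordinate to $[-1+\eta,1-\eta]$. The factor $\lambda$ forces $\tilde{h}$ to be the identity on the free boundary $\theta_{\gamma}(\{\pm 1\}\times I)$; the factor $\nu$ forces it to be the identity on the vertex-overlaps $\theta_{\gamma}(I\times\{\pm 1\})$; and on the inner rectangle both factors are $1$ with $\hat{u}=u,\hat{v}=v$, so $\tilde{h}=h$. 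A symmetric two-parameter damping in each vertex disk $D_{q}$ (radially in the distance to $q$, angularly between consecutive spokes of $\bigcup\Gamma\cap D_{q}$) extends $h$ across $D_{q}$ as the identity on $\partial D_{q}$. Since both constructions are the identity on every common arc $D_{q}\cap D_{\gamma}$, they glue into a homeomorphism of $\bigcup\mathcal{N}$ equal to the identity on its boundary.

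\textbf{Verification, moreover, main obstacle.} The resulting $\tilde{h}$ is supported in $\bigcup\mathcal{N}$ and moves every point by at most a uniform constant times $\delta$, so for $\delta$ small enough it lies in $\V_{\epsilon}$. Replacing each damping factor $\lambda(s)\nu(t)$ in the formula by $\tau\lambda(s)\nu(t)$ for $\tau\in[0,1]$ gives an isotopy of $\tilde{h}$ to the identity supported in $\bigcup\mathcal{N}$; combining this with Fact \ref{f: alexander}, $\tilde{h}$ is a product of disk-supported elements of $H$, hence $\tilde{h}\in H$. For the \emph{moreover} clause: if $h=\mathrm{id}$ on $C\cap N$ for some union $C$ of components of $X\setminus\bigcup\Gamma$, then the perturbations $(u,v)$ vanish on every sector of $\mathcal{N}$ contained in $C$, so the damped formula returns the identity there, and $\tilde{h}=\mathrm{id}$ on $C$. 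The chief technical obstacle is establishing injectivity of the damped map, which reduces to strict monotonicity of $s\mapsto s+\lambda(s)\nu(t)\hat{u}(s,t)$ and its $t$-analogue for each fixed transverse coordinate: outside the inner rectangle $\hat{u}$ is constant in $s$, so monotonicity follows from $|\lambda'|\leq 1/\eta$ together with $|u|=O(\delta)\ll\eta$, while inside the inner rectangle $\tilde{h}=h$ and monotonicity is inherited from $h$ being a homeomorphism.
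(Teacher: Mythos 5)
Your strategy differs from the paper's. The paper works purely topologically: it subdivides each edge-disk $D_{\gamma}$ into small rectangles, shows that $h\cdot\partial N$ meets each small rectangle in a single spanning arc, and straightens these arcs one by one using Fact~\ref{f: arcs within disk}, composing the straightening with $h$ before a final extension. You instead write $\theta_{\gamma}^{-1}\circ h\circ\theta_{\gamma}$ in coordinates as the identity plus a small displacement and dampen the displacement by bump functions. That is a natural smooth-category construction, but here $h$ is only a homeomorphism and the displacement functions $u,v$ are merely continuous, not Lipschitz, which defeats the injectivity argument at its core.

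Concretely, the gap is the injectivity of the damped map. First, the reduction to strict monotonicity of $s\mapsto F_{1}(s,t)$ for each fixed $t$ and of $t\mapsto F_{2}(s,t)$ for each fixed $s$ does not imply injectivity of the plane map $F=(F_{1},F_{2})$; for instance $(s,t)\mapsto (s+t,s+t)$ satisfies both monotonicity conditions and is nowhere injective. Second, even the claimed monotonicity fails in the part of the collar where $s\in[-1+\eta,1-\eta]$ and $|t|>1-\eta$: there $\lambda(s)=1$ and $\hat u(s,t)=u(s,\pm(1-\eta))$, which is \emph{not} constant in $s$, and since $u$ has no Lipschitz bound there is no estimate preventing $s\mapsto s+\nu(t)\,u(s,\pm(1-\eta))$ from folding. (Note also that $s\mapsto s+u(s,t)$, the first coordinate of $h$ restricted to a horizontal slice, need not be monotone just because $h$ is a homeomorphism: a homeomorphism of a rectangle fixing the core vertical arc can permute the horizontal order of nearby points, provided it compensates vertically.) Making this formula-based approach work would require replacing $h$ by a nearby bi-Lipschitz or smooth approximation on a neighbourhood of $\bigcup\Gamma$ that still fixes $\bigcup\Gamma$, which is essentially as hard as the lemma itself; the paper avoids the issue altogether by straightening $h\cdot\partial N$ via repeated applications of Fact~\ref{f: arcs within disk} rather than manipulating coordinate displacements.
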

  \begin{proof}
  Pick some nice system of disks  $\mathcal{N}=(\{D_{q}\}_{q\in\Q},\{D_{\gamma}\}_{\gamma\in\Gamma},\{\theta_{\gamma}\}_{\gamma\in\Gamma})$ around $\Gamma$ with $diam(D_{q})<\frac{\epsilon}{6}$ for all $q\in\Q$ and some $\eta>0$ such that $diam(\theta_{\gamma}(J_{1}\times J_{2}))<\frac{\epsilon}{6}$ for any $\gamma\in\Gamma$, $J_{1},J_{2}\subseteq I$, $diam(J_{1}),diam(J_{2})\leq 2\eta$.

  Pick $t_{0}=-1<t_{1}<\dots t_{m}<t_{m+1}=1$ such that
  $|t_{i}-t_{i+1}|\in (\eta,2\eta]$ for $0\leq i\leq m$.
  For any $1\leq i\leq m$ let $J_{i}=[\frac{t_{i}+t_{i-1}}{2},\frac{t_{i}+t_{i+1}}{2}]$.
  Let also $Q^{+}_{\gamma,i}=\theta_{\gamma}([0,\eta]\times J_{i})$ and $Q^{-}_{\gamma,i}=\theta_{\gamma}([-\eta,0]\times J_{i})$.

  Using compactness we can find some  $\delta\in(0,\frac{\eta}{2})$ such that
  \begin{enumerate}[(1)]
  \item \label{cond6} for any $\gamma\in\Gamma$ any arc in $X\setminus\bigcup\Gamma$ from a point
  in $\theta_{\gamma}([-1,0)\times I)$ to a point $\theta_{\gamma}((0,1]\times I)$ has diameter at least $3\delta$
  \item \label{cond4}$d(im(\theta_{\gamma}),im(\theta_{\gamma'}))\geq \delta$ for distinct $\gamma,\gamma'\in\Gamma$
  \item \label{cond5}$d(im(\theta_{\gamma}),D_{q})\geq \delta$ for $\gamma\in\Gamma$ and $q\in\Q$ such that $q$ is not an endpoint of $\gamma$
  \item \label{cond2}for any $\gamma\in\Gamma$ and
  $(s,s'),(t,t')\in I^{2}$ with $|s-t|+|s'-t'|\geq\frac{\eta}{2}$ we have $d(\theta_{\gamma}(t,t'),\theta_{\gamma}(s,s'))\geq 2\delta$
  \item \label{cond3}$d(D_{\gamma(-1)}\cup D_{\gamma_{(1)}},\theta_{\gamma}(I\times[-1+\frac{\eta}{2},1-\frac{\eta}{2}]))\geq 2\delta$
  \end{enumerate}
  
  It is easy to find some nice neighbourhood $N$ of $\Gamma$ contained in $U$ such that:
  \begin{enumerate}[(a)]
  \item\label{prop1} $d(N,\partial(\bigcup\mathcal{N}))\geq\delta$
  \item \label{prop2} for all $\gamma\in\Gamma$ we have $\theta_{\gamma}^{-1}(N\cap D_{\gamma})=[\xi,\xi]\times I$ for some $\xi>0$ such that $diam(\theta_{\gamma}[-\xi,\xi]\times\{t\})<\delta$ for all $t\in I$
  \end{enumerate}
  
  Assume now that $h:N\to X$ is a homeomorphic embedding restricting to the identity on $\Gamma$, orientation preserving at $\Gamma$ and such that $d(p,h(p))<\delta$ for all $p\in N$.
  Condition (\ref{prop1}) above implies that $h(N)\subseteq\bigcup\mathcal{N}$.
  
  It suffices to show that there exists some $g\in\V_{\frac{\epsilon}{2}}\cap H^{+}_{\cup\Gamma}$ such that $g\circ h$ extends to some $g'\in\V_{\frac{\epsilon}{2}}\cap H^{+}_{\cup\Gamma}$, since then $g^{-1}g'\in\V_{\epsilon}\cap H$ will be the extension we need.
  
  To begin with, notice that we may assume without loss of generality that $h\cdot\partial N$ is transverse to the boundary of the $Q_{\gamma,i}^{+}$ and $Q_{\gamma,i}^{-}$. For any $\gamma\in\Gamma$ and $1\leq i\leq m$
  condition (\ref{cond2}) above implies that
  $$h(\theta_{\gamma}([0,1]\times\{t_{i}\})),h(\theta_{\gamma}([-1,0]\times\{t_{i}\}))\subseteq B(\theta_{\gamma}(0,t_{i}),2\delta),$$ which together with the fact that $h$ is orientation preserving at $\Gamma$ implies that $h(\theta_{\gamma}([0,1]\times\{t_{i}\}))\subseteq Q_{\gamma,i}^{+}$, with only one endpoint on $\partial Q_{\gamma,i}^{+}$, and similarly for $\theta_{\gamma}([-1,0]\times\{t_{i}\})$ and $Q^{-}_{\gamma,i}$.
  We refer to the image of $[0,\eta]\times \{\frac{t_{i}+t_{i-1}}{2}\}$ and $[0,\eta]\times \{\frac{t_{i}+t_{i+1}}{2}\}$ by $\theta_{\gamma}$ as the two vertical sides of $Q^{+}_{\gamma,i}$, and similarly for $Q^{-}_{\gamma,i}$.
  
  \begin{claim}
  There is only one subarc of $h\cdot\partial N$ joining the two vertical sides of $Q^{+}_{\gamma,i}$ (resp. $Q^{-}_{\gamma,i}$), which passes through $h(\theta_{\gamma}((\xi,t_{i}))$ (resp. $h(\theta_{\gamma}(-\xi,t_{i}))$).
  \end{claim}
  \begin{subproof}
  Conditions (\ref{cond6}) and (\ref{prop2}) and the fact that $h$ is orientation preserving at $\Gamma$ imply that no point
  in $\theta_{\gamma}([-1,0)\times I)$ can be sent to $Q^{+}_{\gamma,i}$ by $h$. Together with conditions (\ref{cond4}) and (\ref{cond5}) this implies that no point outside of
  $D_{\gamma(-1)}\cup\theta_{\gamma}([0,1]\times I)\cup D_{\gamma(1)}$ can be sent to $Q^{+}_{\gamma,i}$ by $h$
  and a similar statement holds for $Q^{-}_{\gamma,i}$.
  
  On the other hand, there cannot exist $p\in  D_{\gamma(-1)}\cup\theta_{\gamma}(\{\xi\}\times [-1,t_{i}])=:A$ such that $h(p)\in\theta_{\gamma}([0,1]\times \{\frac{t_{i}+t_{i+1}}{2}\})=:B$, since by conditions (\ref{cond2}) and (\ref{cond3}) we have $d(A,B)\geq\delta$,  and the same holds if we exchange the role of the two endpoints of $\gamma$ and/or of $+$ and $-$. The moreover part is clear and the result follows.
  \end{subproof}
  
  This implies the existence of arcs $\alpha_{\gamma,i}^{+}\subseteq Q_{\gamma,i}^{+}$ and $\alpha_{\gamma,i}^{-}\subseteq Q_{\gamma,t_{i}}^{-}$ from $\theta_{\gamma}(\eta,t_{i})$ to $\theta_{\gamma}(\xi,t_{i})$ and from
  $\theta_{\gamma}(-\eta,t_{i})$ to $\theta_{\gamma}(-\xi,t_{i})$ respectively of which only the endpoints
  $\theta_{\gamma}(\xi,t_{i})$ and $\theta_{\gamma}(-\xi,t_{i})$ respectively belong to $h(N)$. Let $\beta^{+}_{\gamma,i}$ be the concatenation of $\alpha^{+}_{\gamma,i}$ and the arc with image $h(\theta_{\gamma}([0,\xi]\times\{i\}))$ and define $\beta^{-}_{\gamma,t}$ in a similar way. Fact \ref{f: arcs within disk} implies the existence of homeomorphisms $g_{\gamma,i}^{+}\in H$ supported on $Q^{+}_{\gamma,i}$ and
  $g_{\gamma,i}^{-}\in H$ supported on $Q^{-}_{\gamma,i}$ such that $g_{\gamma,i}^{+}(\beta^{+}_{\gamma,i})=\theta_{i}([0,\eta]\times\{t_{i}\})$
  and $g_{\gamma,i}^{-}(\beta^{-}_{\gamma,i})=\theta_{i}([-\eta,0]\times\{t_{i}\})$. Moreover, we may assume that
  $g_{\gamma,i}^{+}\circ h$ is the identity on $\theta_{\gamma}([0,\eta]\times\{t_{i}\})$ and similarly for $g_{\gamma,i}^{-}\circ h$. Since the $Q^{\pm}_{\gamma,i}$ have disjoint interiors and diameter at most $\leq\frac{\epsilon}{6}$ it follows that
  the product $g:=\prod_{\substack{\gamma\in\Gamma\\ 1\leq i\leq m}}g^{+}_{\gamma,i}g^{-}_{\gamma,i}$ is in $\V_{\frac{\epsilon}{6}}\cap H$.
  
  For all $q\in\Q$ let $D'_{q}$ is the union of $D_{q}$ and all the sets of the form
  $\theta_{\gamma}(I\times[-1,t_{1}])$ for $\gamma\in\Gamma$ with $\gamma(-1)=q$ and
  $\theta_{\gamma}(I\times[t_{m},1])$ for $\gamma\in\Gamma$ with $\gamma(1)=q$.
  Notice that $diam(D'_{q})\leq\frac{\epsilon}{2}$, since $diam(D_{q})\leq\frac{\epsilon}{6}$
  and
  $$diam(\theta_{\gamma}([-\eta,\eta]\times[-1,t_{0}]))\leq\frac{\epsilon}{6},$$ by the choice of $\eta$ and similarly for $\theta_{\gamma}(I\times[t_{m},1])$.

  Since $g$ is supported on $\bigcup\mathcal{N}$, the image of $N$ by the rectified map $g\circ h$ is still contained in $\bigcup\mathcal{N}$. Additionally, $g\circ h(N\cap E)=g\circ h(N)\cap E$
  whenever $E$ is either:
  \begin{itemize}
  \item $D'_{q}$ for some $q\in\Q$
  \item $\theta_{\gamma}([0,\eta]\times[t_{i},t_{i+1}])$ or $\theta_{\gamma}([-\eta,0]\times[t_{i},t_{i+1}])$  for  $\gamma\in\Gamma$ and $1\leq i\leq m-1$
  \end{itemize}
  and $g\circ h$ restricts to the identity on the arc $\partial E\cap N$. It follows from Fact \ref{f: arcs within disk} that $g\circ h$ extends to some $g'\in H$ which is the product of elements supported on sets $E$ as above. Since $diam(E)\leq\frac{\epsilon}{2}$ in both cases, it follows that $g'\in\V_{\frac{\epsilon}{2}}$. This concludes the proof. The moreover part is clear.
  
  \end{proof}

\section{Neighbourhoods of the identity contain fix-point stabilizers of embedded graphs}
  
  \newcommand{\subg}[1]{\langle #1 \rangle}
  
  \label{s: neighbourhoods are rich}We begin with the following observation:
  \begin{observation}
  \label{o: generation compact-open topology} There do not exist disks $D,D'$ with $D\subseteq D'$
  and $\mathcal{V}\in\nd$ such that $g\cdot D\subseteq D'$ for all $g\in \mathcal{V}$.
  \end{observation}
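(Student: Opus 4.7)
The plan is a proof by contradiction: assuming $\V \in \nd$ with $\V \subseteq [D, D']$ for some disks $D \subseteq D'$, I will produce a $\T$-neighbourhood of the identity contained in an arbitrarily prescribed basic $\tc$-neighbourhood $\V_{K_0, \epsilon_0}$, which, thanks to the strict inclusion $\T \subsetneq \tc$, will contradict a good choice of $(K_0, \epsilon_0)$. Replacing $\V$ by $\V \cap \V^{-1}$ I assume $\V = \V^{-1}$, and replacing $D$ by any smaller disk contained in $\mathring{D'}$ (which preserves $\V \subseteq [D, D']$) I may also assume $D \subseteq \mathring{D'}$. A standard homogeneity argument in topological groups turns the strict inclusion $\T \subsetneq \tc$ into the concrete statement that some basic $\V_{K_0, \epsilon_0}$ contains no $\T$-neighbourhood of the identity.

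By compactness of $K_0$ and the local structure of the surface $X$, I cover $K_0$ by the interiors of finitely many disks $E_1, \ldots, E_n$ with each $E_i \subseteq \mathring{E'_i}$ for some disk $E'_i$ of diameter less than $\epsilon_0$. For each $i$, two applications of Fact \ref{f: transitive on disks}---first within $X$ sending $D'$ to $E'_i$, and then inside the disk $E'_i$ sending the image of $D$ to $E_i$ and extending by the identity outside $E'_i$---produce $h_i \in H \leq G$ with $h_i(D) = E_i$ and $h_i(D') = E'_i$. Since conjugation by $h_i$ is continuous in the group topology $\T$, each $\V_i := h_i \V h_i^{-1}$ is a symmetric $\T$-neighbourhood of the identity, and the easy computation $h[D,D']h^{-1} = [hD, hD']$ gives $\V_i \subseteq [E_i, E'_i]$.

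The finite intersection $\U := \bigcap_{i=1}^n \V_i$ is then a symmetric $\T$-neighbourhood of the identity. For any $g \in \U$ and $p \in K_0$, picking $i$ with $p \in \mathring{E_i}$ yields $g(p) \in g(E_i) \subseteq E'_i$, so $d(p, g(p)) \leq \mathrm{diam}(E'_i) < \epsilon_0$; symmetry of $\U$ gives the same bound for $g^{-1}$, and hence $\U \subseteq \V_{K_0, \epsilon_0}$, contradicting the choice of $(K_0, \epsilon_0)$.

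The main obstacle, in my view, is the opening move---extracting a concrete basic $\V_{K_0, \epsilon_0}$ witnessing the failure of $\T$ to coincide with $\tc$ at the identity---since this is where the strict inclusion hypothesis is essentially used, via homogeneity of group topologies. Once such a pair is in hand, the rest is a routine transitivity-plus-conjugation computation, with Fact \ref{f: transitive on disks} the only non-trivial input.
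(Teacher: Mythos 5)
Your proof is correct and takes essentially the same route as the paper. Both apply Fact~\ref{f: transitive on disks} twice to conjugate $\V$ into a set of the form $[E,E']$ for arbitrarily small disks $E\subseteq E'$ near any prescribed point, then use compactness to intersect finitely many such conjugates and obtain a $\T$-neighbourhood of the identity inside a given basic $\tc$-neighbourhood, contradicting $\T\subsetneq\tc$.
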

  \begin{proof}
  Indeed, given such $D,D'$, any $p\in X$ and any $\epsilon>0$ small enough applying Fact \ref{f: transitive on disks} two times yields some $h\in H$ such that  $p\in h\cdot \mathring{D}\subseteq h\cdot D'\subseteq B(p,\epsilon)$. Then $g\cdot D_{0}\subseteq B(p,\epsilon)$ for any $g\in\V^{h^{-1}}$, where $D_{0}=h\cdot D$. It follows easily from this using the definition of compactness that the conjugates of $\V$ by the action of $H$ generate a system of neighbourhoods of $\tc$ at the identity.
  \end{proof}
  
  Lemma \ref{l: mixing disks} below can be seen as a consequence of the theory of pseudo-Anosov mapping classes on compact surfaces. For the sake of self-containment and with the potential for higher dimension generalizations in view we provide a more elementary proof.
  
  \begin{lemma}
  \label{l: cheese} Suppose that $\alpha_{1},\alpha_{2},\dots \alpha_{m}$ are pair-wise transverse regular paths in $X$
  and $U_{1},\dots U_{m}$ open sets such that $\alpha_{l}(-1)\in U_{l}$. Denote by $\mathcal{I}$ the collection of self-intersection points of the $\alpha_{l}$ and of intersection points between different $\alpha_{l}$. Then for any $\mu>0$ there exists some $f_{l}\in H$ such that if we let $f=f_{m}f_{m-1}\dots f_{1}$
  then $\alpha_{l}\setminus N_{\mu}(\mathcal{I})\subseteq f\cdot U_{l}$ for $1\leq l\leq m$.
  \end{lemma}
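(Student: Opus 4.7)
The plan is to construct each $f_l$ as a homeomorphism supported in a thin tubular neighborhood $T_l$ of $\alpha_l$, with the $T_l$'s arranged to pairwise meet only inside a small neighborhood of $\mathcal{I}$. Each $f_l$ will stretch a small open disk $V_l \subseteq U_l$ around $\alpha_l(-1)$ along $\alpha_l$ so that $f_l \cdot V_l \supseteq \alpha_l \setminus N_\mu(\mathcal{I})$. Since the effect of $f_l$ outside $N_{\mu/2}(\mathcal{I})$ is confined to $T_l$, the different $f_l$'s will not overwrite each other's contributions on the target regions.

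For the setup, first I would replace each $U_l$ by a small open disk $V_l \subseteq U_l$ around $\alpha_l(-1)$; it suffices to show $\alpha_l \setminus N_\mu(\mathcal{I}) \subseteq f \cdot V_l$. Since the sets $\alpha_l \setminus N_{\mu/4}(\mathcal{I})$ are compact and pairwise disjoint (their pairwise intersections are contained in $\mathcal{I}$), they are separated by some $\rho > 0$; I would then choose tubular neighborhoods $T_l \supseteq \alpha_l$ thin enough outside $N_{\mu/2}(\mathcal{I})$ to guarantee $T_l \cap T_{l'} \subseteq N_{\mu/2}(\mathcal{I})$ for $l \neq l'$. I would also arrange $V_l \subseteq T_l$ and, whenever $\alpha_l(-1) \notin \alpha_{l'}$, also $V_l \cap T_{l'} = \emptyset$ (possible after simultaneously shrinking $V_l$ and $T_{l'}$).

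Next, for each $l$, I would build $f_l \in H$ supported in $T_l$ with $f_l \cdot V_l \supseteq \alpha_l \setminus N_\mu(\mathcal{I})$ by decomposing $\alpha_l$ into embedded arc pieces $\alpha_l = \alpha_{l,1} * \cdots * \alpha_{l,k_l}$, cutting at every self-intersection of $\alpha_l$ so that each junction between consecutive pieces lies in $\mathcal{I}$. For each piece I take a thin rectangular extension $R_{l,j} \subseteq T_l$ via Fact \ref{f: arcs within disk}, and inside $R_{l,j}$ I define a point-pushing ``shear'' homeomorphism that extends the running image of $V_l$ along $\alpha_{l,j}$. The product (in order) of these shears is $f_l$; any disturbance to previously covered pieces happens only near the junctions, which lie in $N_\mu(\mathcal{I})$, and therefore does not affect the target region $\alpha_l \setminus N_\mu(\mathcal{I})$.

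Finally, with $f = f_m \cdots f_1$, I would verify $f \cdot V_l \supseteq \alpha_l \setminus N_\mu(\mathcal{I})$. Given $p \in \alpha_l \setminus N_\mu(\mathcal{I})$, note $p \notin T_{l'}$ for $l' \neq l$, since $p \notin N_{\mu/2}(\mathcal{I}) \supseteq T_l \cap T_{l'}$; hence $f_{l'}^{-1}$ fixes $p$. Therefore $f^{-1}(p) = f_1^{-1} \circ \cdots \circ f_m^{-1}(p)$ reduces to $f_1^{-1} \circ \cdots \circ f_{l-1}^{-1}(q)$, where $q = f_l^{-1}(p) \in V_l$; and since $V_l$ avoids the supports of the $f_{l''}$ with $l'' < l$ in the generic case $\alpha_l(-1) \notin \bigcup_{l' \neq l}\alpha_{l'}$, the remaining operations fix $q$, giving $f^{-1}(p) \in V_l \subseteq U_l$. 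The hard part will be the edge case $\alpha_l(-1) \in \mathcal{I}$, in which $V_l$ cannot avoid all other $T_{l'}$; there I would take $V_l$ sufficiently small inside $U_l$ and control the size of the displacements produced by the offending $f_{l'}$'s so that the image of $q$ under the remaining operations still lies in $U_l$ (alternatively, when possible, reordering the indices so that indices $l'$ with $\alpha_l(-1) \in \alpha_{l'}$ come after $l$).
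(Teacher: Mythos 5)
Your plan follows essentially the same route as the paper's: build each $f_l$ as a product of point-pushing homeomorphisms supported in thin rectangular extensions of the arc pieces of $\alpha_l$, then argue the different $f_l$ interact only inside a small neighbourhood of $\mathcal{I}$, using that the tubes and the shrunken starting disks can be made pairwise disjoint outside $N_{\mu/2}(\mathcal{I})$. Your $T_l$ play the role of the paper's $N_\nu(\alpha_l)$, your $V_l$ the role of the shrunken $U_l$, and your final check that $f^{-1}(p)$ stays in $V_l$ is the same disjointness-of-supports computation as the paper's $\bar f_l\cdot U_l=f_l\cdot U_l$ followed by $f\cdot U_l\supseteq(\bar f_l\cdot U_l)\setminus\bigcup_{l'>l}N_\nu(\alpha_{l'})$. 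One remark: you are right to single out the case $\alpha_l(-1)\in\bigcup_{l'\neq l}\alpha_{l'}$ as the delicate one — the paper's ``we may assume $U_l\cap\bigcup_{l'\neq l}N_\nu(\alpha_{l'})=\emptyset$'' is only a valid reduction when that does not happen — but your proposed patches (shrinking $V_l$, or reordering) do not obviously close it in general, e.g.\ reordering fails if two starting points coincide; fortunately in the paper's only application (Lemma~\ref{l: mixing disks}) the starting points $p_i$ can be chosen disjoint from $\mathcal{I}$, so the edge case does not actually arise.
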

  \begin{proof}
  Consider first the case $m=1$. Let $\alpha=\alpha_{1}$, $U_{1}=U$ and $p=\alpha(-1)$. Let $\alpha=\alpha^{1}*\dots \alpha^{k}$ be a decomposition of $\alpha$ into arcs not containing any point of $\mathcal{I}$ in their interior and write $\bar{\alpha}^{i}=\alpha^{1}*\dots \alpha^{i}$.
  Let $\hat{\alpha}^{i}=\alpha^{i}\setminus N_{\frac{\mu}{2}}(\mathcal{I})$ and pick some $\nu>0$ be smaller than
  $\frac{\mu}{2}$ and $d(\hat{\alpha}^{i},\hat{\alpha}^{j})$ for  $1\leq i<j\leq m$.

  We choose $g_{i}\in\p_{\nu}(\alpha^{i})$ by induction so that if we write $\bar{g}_{i}=g_{i}g_{i-1}\dots g_{1}$ ($\bar{g}_{0}=1$) then for all $0\leq i\leq m$ the set $\bar{g}_{i}\cdot U$ contains some path $\beta^{i}$ from $\bar{B}_{\nu}(p)$ to $\bar{\alpha}^{i}(1)$ such that $\bar{\alpha}^{i}\setminus N_{2\nu}(\mathcal{I})\subseteq\beta^{i}$ and $\alpha^{i}\subseteq\beta^{i}$ if $i\geq 1$.
  
  In the base case we simply take as $\beta^{0}$ some non-trivial arc in $U$ ending in $\alpha(-1)$ such that
  $\beta^{0}*\alpha^{0}$ is still an arc. Suppose now that $i\geq 1$ and the result has been shown for $i-1$.
  Let $\tilde{\alpha}^{i}$ be a rectangular extension of $\alpha^{i}$ with $im(\tilde{\alpha}^{i})\subseteq N_{\nu}(\alpha^{i})$ and let $D=im(\tilde{\alpha}^{i})$. We may assume that $D\cap\beta^{i-1}$ is a single arc $\gamma$ from $\partial D$ to $\alpha^{i}(-1)$ (use a rectangular extension of an arc $\gamma'*\beta$, where $\gamma'$ is a subarc of $\beta^{i-1}$). By Fact \ref{f: arcs within disk} there exist some $g_{i}$ supported in $D$ such that $g_{i}\cdot\gamma=\gamma*\alpha^{i}$.
  If $1\leq i\leq m-1$, the construction ensures that $\alpha^{i}\setminus N_{2\nu}(\mathcal{I})\subseteq\bar{g}_{i+1}\cdot U$, from it which it follows that
  $$\bar{g}_{m}\cdot U\supseteq (g_{i+1}\cdot U)\setminus\bigcup\nolimits_{j=i+2}^{m}N_{\nu}(\alpha^{j})\supseteq \alpha^{i}\setminus N_{2\nu}(\mathcal{I})$$
  
  For the general case one can proceed similarly. Take $\nu=\min\{\frac{1}{2}\mu,d(\hat{\alpha}_{l},\hat{\alpha}_{l'})\}_{l'\neq l}$, $\hat{\alpha}_{l}=\alpha_{l}\setminus N_{\nu}(\mathcal{I})$. We may assume that $U_{l}\cap\bigcup_{l'\neq l}N_{\nu}(\alpha_{l})=\emptyset$. For $1\leq l\leq m$ let $\mathcal{I}_{l}$ be the set of self-intersection points of $\alpha_{l}$ and pick some $f_{l}\in\p_{\nu}(\alpha_{l})$ such that
  $\alpha_{l}\setminus N_{\mu}(\mathcal{I}_{l})\subseteq f_{l}\cdot U_{l}$. Let $\bar{f}_{l}=f_{l}f_{l-1}\cdots f_{1}$. Then $\bar{f}_{l}(U_{l})=f_{l}(U_{l})$ and
  \[
  f\cdot U_{l}\supseteq (\bar{f}_{l}\cdot U_{l})\setminus\bigcup\nolimits_{l'> l}N_{\nu}(\alpha_{l'})\supseteq \alpha_{l}\setminus N_{\mu}(\mathcal{I}).
  \]
  \end{proof}

  \begin{lemma}
  \label{l: mixing disks}Let $D,E_{1},\dots E_{k}$ be disks in $X$ such that $E_{i}$ is not contained in $D$ for any $1\leq i\leq k$. Let also $K\subseteq X$ be a compact subset and $\epsilon$ a positive real. Then there exists $h,h'\in H$ fixing $D$ such that for any $1\leq i,j\leq k$ and any connected component $C$ of the complement of $h\cdot E_{i}\cup h'\cdot E_{j}$ either:
  \begin{itemize}
  \item $K\cap C=\emptyset$
  \item $diam(C)<\epsilon$
  \item $C\subseteq N_{\epsilon}(D)$
  \end{itemize}
  \end{lemma}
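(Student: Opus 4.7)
The plan is to choose $h$ and $h'$ so that, for every pair $(i,j)$, the union $hE_i\cup h'E_j$ contains (up to arbitrarily small holes) a fixed finite embedded graph $\Gamma\subseteq X\setminus D$ whose complementary regions in $X$ are already either disjoint from $K$, contained in $N_\epsilon(D)$, or of diameter less than $\epsilon$. The main engine is Lemma \ref{l: cheese}: it lets a single homeomorphism push $k$ prescribed open sets simultaneously along $k$ prescribed regular paths, which is what will let us force every $hE_l$ (resp.\ every $h'E_l$) to contain the target mesh.

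First I would build the mesh. Using the exhaustion of $X$ by compact subsurfaces, fix a compact subsurface $Y\subseteq X$ with $K\cup D\subseteq \operatorname{int}(Y)$ and triangulate $Y$ so that every triangle has diameter less than $\epsilon/4$ and such that some cycle $C$ in the $1$-skeleton bounds a disk $D^+$ with $D\subseteq D^+\subseteq N_{\epsilon/2}(D)$. Take $\Gamma$ to be $C$ together with the edges of the triangulation lying outside $D^+$; then every complementary region of $\bigcup\Gamma$ in $X$ satisfies one of the three desired conditions. Split the edges of $\Gamma$ as $\Gamma_1\cup\Gamma_2$ in a way that makes each $\Gamma_j$ Eulerian (doubling a few edges if necessary), so that each $\Gamma_j$ can be traversed by a single regular path.

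For each $l$ pick an open disk $U_l\subseteq \operatorname{int}(E_l)\setminus D$, which exists because $E_l\not\subseteq D$ forces $\operatorname{int}(E_l)\not\subseteq D$. Build a regular path $\alpha_l$ starting at a point of $U_l$ that first connects through a short arc to a vertex of $\Gamma_1$ and then performs a small perturbation of an Eulerian traversal of $\Gamma_1$, with $\alpha_l$ kept at positive distance from $D$. Using Fact \ref{f: transversality} the $\alpha_l$ can be made pair-wise transverse. Build $\beta_1,\dots,\beta_k$ analogously from $\Gamma_2$. Applying Lemma \ref{l: cheese} with $\mu$ chosen much smaller than $\epsilon/4$ and much smaller than $d(\bigcup\alpha_l,D)$, the resulting $h:=f$ is supported in an arbitrarily small neighborhood of $\bigcup\alpha_l$, hence fixes $D$, and satisfies $hE_l\supseteq \alpha_l\setminus N_\mu(\mathcal{I})$ for every $l$. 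Repeating with the $\beta_l$'s produces $h'$.

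Finally, fix $(i,j)$. Then
\[
hE_i\cup h'E_j \;\supseteq\; (\alpha_i\cup\beta_j)\setminus\bigl(N_\mu(\mathcal{I})\cup N_{\mu'}(\mathcal{I}')\bigr),
\]
and the right-hand side is a small deformation of $\bigcup\Gamma$ with only finitely many small holes. Each connected component of the complement is therefore contained in a slight thickening of at most a few adjacent components of $X\setminus\bigcup\Gamma$ glued by narrow bridges through the holes. By choosing the triangulation finer, the perturbation smaller, and $\mu,\mu'$ smaller in this order, each such component either avoids $K$, has diameter less than $\epsilon$, or lies in $N_\epsilon(D)$. The main technical obstacle is precisely this quantitative control of the bridges through the sets $N_\mu(\mathcal{I})$ and $N_{\mu'}(\mathcal{I}')$: once the mesh is much finer than $\mu$ and $\mu'$, each hole can merge only a bounded number of adjacent cells, and the diameters remain under control, but one has to fix the quantifiers carefully in the order indicated above. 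A secondary combinatorial point is producing the Eulerian decomposition of $\Gamma$, which is routine after edge doubling.
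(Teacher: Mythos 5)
Your proposal follows the same broad skeleton as the paper's proof (compact subsurface, fine triangulation, regular paths through the mesh, Lemma \ref{l: cheese} to push the disks along them), but it is missing the one idea that makes the argument actually close: a mechanism for sealing the holes that $N_\mu(\mathcal{I})$ punches near the vertices of the mesh. You flag this yourself as ``the main technical obstacle,'' but the quantitative claim you appeal to -- that once the mesh is much finer than $\mu,\mu'$ each hole merges only a bounded number of cells and diameters stay controlled -- is not true in the form stated and cannot be rescued by ordering the quantifiers more carefully.

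Concretely: in your construction the paths $\alpha_1,\dots,\alpha_k$ are all perturbed Euler traversals of the same $\Gamma_1$, so they re-visit the vertices repeatedly, and the intersection set $\mathcal{I}$ accumulates near every branch vertex of $\Gamma_1$; the same happens for $\beta_1,\dots,\beta_k$ near the vertices of $\Gamma_2$. Since $\Gamma_1$ and $\Gamma_2$ together must cover $\Gamma$, a generic vertex $v$ of $\Gamma$ has a hole from the $\alpha$-family \emph{and} a hole from the $\beta$-family nearby, so $h\cdot E_i\cup h'\cdot E_j$ still has a hole near $v$. That single hole merges all the cells incident to $v$; a hole near an adjacent vertex $v'$ merges all cells incident to $v'$; and any cell incident to both $v$ and $v'$ links these two clusters. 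Chaining along the $1$-skeleton, a single complementary component can snake through arbitrarily many cells, so its diameter is not controlled by the mesh size at all. The paper's proof avoids exactly this by placing a small annulus $A_v$ around each vertex, routing every $\alpha_i$ so that all intersections happen strictly inside the inner disk $F_v$, and setting $h'=\tau h$ where $\tau$ is a product of squared Dehn twists over the $A_v$; Observation \ref{o: dehn twist} then guarantees that $h\cdot E_i\cup h'\cdot E_j$ contains a core curve of each $A_v$, which is precisely what isolates the cluster of cells around $v$ from the rest. Without this (or some equivalent device producing a separating curve around each vertex), the complement of $h\cdot E_i\cup h'\cdot E_j$ need not decompose into small pieces. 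Secondary but also unaddressed: making all $\alpha_l$'s slight perturbations of the same Euler path produces intersections spread densely along the edges, not just at vertices, so even the hypothesis that $\mathcal{I}$ is concentrated near the $0$-skeleton requires the corridor-routing construction of the paper, not a generic perturbation.
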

  \begin{proof}
  For $1\leq i\leq k$ choose some $p_{i}\in \mathring{E_{i}}\setminus D$.
  Pick some compact submanifold $Y$ such that $N_{\epsilon}(D),N_{\epsilon}(K)\subseteq Y$ and let $\mathcal{T}$ a triangulation of $Y$. We can choose $\mathcal{T}$ so that each triangle has diameter at most $\frac{\epsilon}{2}$ and $\{p_{i}\}_{i=1}^{k}\cap\mathscr{V}_{D}=\emptyset$. Let $\mathcal{T}_{D}$ be the collection of triangles disjoint from $D$ and $\mathscr{V}_{D}$ be the collection of all their vertices.
  Given adjacent $u,v\in\mathscr{V}_{D}$ we denote by $[u,v]$ the corresponding triangle side, an arc in $X$.
  
  Let $\eta=\min\{\frac{\epsilon}{2},\frac{1}{3}d(v,v')\,|\,v,v'\in\mathscr{V}_{D},v\neq v'\}$.
  For each $v\in\mathscr{V}_{D}$ choose disks $F_{v},F'_{v}$ such that $v\in \mathring{F}_{v}$, $F_{v}\subseteq \mathring{F}'_{v}$ and $F'_{v}\subseteq\mathring{F}''_{v}\subseteq B(v,\eta)$ and let $A_{v}$ be the annulus $F'_{v}\setminus\mathring{F}_{v}$.
  
  For $1\leq i\leq k$ it is easy to find some regular path $\alpha_{i}$ in $Y\setminus D$ starting at $p_{i}$ and such that for each triangle $T$ in $\mathcal{T}$ that does not intersect $D$ and each side $[u,v]$ of $T$ there exists some arc $\alpha_{i}^{[u,v]}\subseteq\alpha_{i}\cap\mathring{T}$ from a point $u'\in \mathring{F}_{u}$ to a point $v'\in \mathring{F}_{v}$. We may also make the choice in such a way that the $\alpha_{i}$ are pair-wise transverse and that if we denote by $\mathcal{I}$ the set consisting of all self-intersection points of $\alpha_{i}$ for some $i$ and of intersection points of $\alpha_{i}$ and $\alpha_{j}$ for different $i,j$, then $\mathcal{I}$ is contained in $\bigcup_{v\in\mathscr{V}_{D}}\mathring{F}_{v}$ and disjoint from all the $\alpha_{i}^{[u,v]}$.
  
  Lemma \ref{l: cheese} applied to the collection  of paths $\{\alpha_{1}\}_{i=1}^{k}$ with a suitably small constant $\mu$  provides some $f\in H$ fixing $D$ such that for each $1\leq i\leq k$ and each edge $[u,v]$ in a triangle $T$ in $\mathcal{T}_{D}$ we have $\alpha_{i}^{[u,v]}\subset f\cdot E_{i}$.
  
  For each $v\in\mathscr{V}_{D}$ let $\tau_{v}\in H$ be a Dehn twist over the annulus $A_{v}$. Let $\tau=\prod_{v\in\mathscr{V}_{D}}\tau^{2}_{v}$ and $h'=\tau h$. Then by virtue of Observation \ref{o: dehn twist} for each $1\leq i, j\leq k$ the set
  $E_{i,j}:=h\cdot E_{i}\cup h'E_{j}$ contains:
  \begin{itemize}
  \item a core curve $\beta_{v}$ of each of the annuli $A_{v}$, $v\in\mathscr{V}_{D}$
  \item for each edge $[v,v']$ of some triangle $T$ in $\mathcal{T}_{D}$ some path from
  $\beta_{v}$ to $\beta_{v'}$ in $\mathring{T}$
  \end{itemize}
  It easily follows that every connected component in the complement of $E_{i,j}$ is either contained in
  $N_{\epsilon}(D)$ or it has diameter less than $\epsilon$ or else it is contained in $N_{\epsilon}(Y^{c})$ and is thus disjoint from $K$.
  \end{proof}
  
  \begin{corollary}
  \label{c: mixing disks}Let $D,E_{1},\dots E_{k}$ be disks in $X$.
  Assume that $E_{i}\nsubseteq D$ for $1\leq i\leq k$ and there exists $\V\in\nd$ such that for all $g\in\V$ there is $1\leq i\leq k$ with $g\cdot E_{i}\cap D=\emptyset$.
  Then for any $\epsilon>0$ and any compact set $K\subseteq X$ there is $\V_{D}^{K,\epsilon}\in\nd$ such that for any $g\in\V_{D}^{K,\epsilon}$ either:
  \begin{itemize}
  \item $g\cdot D\subseteq N_{\epsilon}(D)$
  \item $diam(g\cdot D)<\epsilon$
  \item $g\cdot D\cap K=\emptyset$
  \end{itemize}
  \end{corollary}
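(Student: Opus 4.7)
The plan is to combine Lemma \ref{l: mixing disks} with the hypothesis by a conjugation argument: the union $h\cdot E_{i}\cup h'\cdot E_{j}$ produced by Lemma \ref{l: mixing disks} will act as a ``cage'' trapping $g\cdot D$ in a connected component of one of the three controlled types.

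First, since $\T$ is a group topology, I may shrink $\V$ to $\V\cap\V^{-1}$ and assume it is symmetric. Applying the hypothesis to $g^{-1}$ and taking inverses then yields the equivalent reformulation: for every $g\in\V$ there is some $1\leq i\leq k$ with $g\cdot D\cap E_{i}=\emptyset$. I will work with this dual form throughout.

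Next, given $K$ and $\epsilon$, I apply Lemma \ref{l: mixing disks} to obtain $h,h'\in H$ (both fixing $D$ setwise) so that for every $1\leq i,j\leq k$ each connected component of $X\setminus(h\cdot E_{i}\cup h'\cdot E_{j})$ satisfies one of its three listed alternatives. Continuity of conjugation in $\T$ guarantees that
\[
\V_{D}^{K,\epsilon}:=(h\V h^{-1})\cap(h'\V h'^{-1})
\]
lies in $\nd$. For $g\in\V_{D}^{K,\epsilon}$, both $g_{1}:=h^{-1}gh$ and $g_{2}:=h'^{-1}gh'$ belong to $\V$, so the reformulated hypothesis provides indices $i,j$ with $g_{1}\cdot D\cap E_{i}=\emptyset$ and $g_{2}\cdot D\cap E_{j}=\emptyset$. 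Using $h\cdot D=D=h'\cdot D$, these translate directly into $g\cdot D\cap h\cdot E_{i}=\emptyset$ and $g\cdot D\cap h'\cdot E_{j}=\emptyset$.

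Finally, because $g\cdot D$ is connected, it is contained in a single component $C$ of $X\setminus(h\cdot E_{i}\cup h'\cdot E_{j})$, and the three alternatives for $C$ furnished by Lemma \ref{l: mixing disks} become the three alternatives for $g\cdot D$ in the statement. The only real subtlety is the symmetrisation step, which is precisely what converts the assumption ``$g$ pushes \emph{some} $E_{i}$ off $D$'' into the dual form ``$g\cdot D$ avoids \emph{some} $E_{i}$''; using two independent witnesses $h,h'$ rather than one is then exactly what makes the caging argument go through.
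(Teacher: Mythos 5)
Your proof is correct and is essentially the paper's own argument: the paper likewise takes $\V_{D}^{K,\epsilon}=\V^{h^{-1}}\cap\V^{(h')^{-1}}$ with $h,h'$ supplied by Lemma \ref{l: mixing disks}, then traps the connected set $g\cdot D$ in a single component of $X\setminus(h\cdot E_{i}\cup h'\cdot E_{j})$. The symmetrization step you spell out, passing from ``some $g\cdot E_{i}$ misses $D$'' to ``some $E_{i}$ misses $g\cdot D$'', is indeed needed and is left implicit in the paper's one-line proof.
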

  \begin{proof}
  Simply let $\mathcal{V}_{D}^{K,\epsilon}=\mathcal{V}^{h^{-1}}\cap\mathcal{V}^{(h')^{-1}}$, where $h,h'$ are as given by Lemma \ref{l: mixing disks} applied to $D,E_{1},\dots E_{k}$.
  \end{proof}

  \begin{lemma}
  \label{l: spread all over}Let $D,E_{1},\dots E_{k}$ be disks in $X$ and $\V\in\nd$. Then for any $\V\in\nd$ there exists $g\in\V$ such that $g\cdot D\cap E_{i}\neq\emptyset$ for all $1\leq i\leq k$.
  \end{lemma}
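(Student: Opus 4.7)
I plan to argue by contradiction. Assume $\V \in \nd$ (which we may take to be symmetric after replacing it by $\V \cap \V^{-1}$) is such that no element of $\V$ moves $D$ to meet every $E_{i}$. We first dispose of the trivial case $E_{i} \subseteq D$: for such $i$ the condition $g \cdot D \cap E_{i} \neq \emptyset$ already holds for any $g$ in a suitable compact-open neighbourhood of $1$, which is contained in $\V$ since $\T \subseteq \tc$. After this reduction, symmetry of $\V$ lets us rewrite the assumption as: for every $g \in \V$ there is $i$ with $g \cdot E_{i} \cap D = \emptyset$. This is precisely the hypothesis of Corollary \ref{c: mixing disks}.

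Applying that corollary with a compact $K \subseteq X$ chosen large and $\epsilon > 0$ chosen small produces $\V_{D}^{K,\epsilon} \in \nd$ in which every $g$ satisfies one of (a) $g \cdot D \subseteq N_{\epsilon}(D)$, (b) $diam(g \cdot D) < \epsilon$, or (c) $g \cdot D \cap K = \emptyset$. Inspecting the construction in Lemma \ref{l: mixing disks} shows that the ``near-$D$'' and ``small'' components of the complement of the separating system $h \cdot E_{i} \cup h' \cdot E_{j}$ that meet $K$ form a finite family of pairwise disjoint disks $\{D_{l}'\}_{l}$ with $D \subseteq D_{1}'$; since $g \cdot D$ is connected, cases (a) and (b) both force $g \cdot D \subseteq \bigcup_{l} D_{l}'$.

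The contradiction is then obtained via a direct generalization of Observation \ref{o: generation compact-open topology}: for any $\U \in \nd$ and any finite family of pairwise disjoint disks $\{D_{l}'\}$ with $D \subseteq D_{1}'$, the set $\U$ cannot be contained in $\{g : g \cdot D \subseteq \bigcup_{l} D_{l}'\}$. The proof mirrors that of the Observation; by Fact \ref{f: transitive on disks} one finds $h \in H$ compressing $\bigcup_{l} D_{l}'$ into an arbitrarily small ball $B(p, \epsilon')$, placing $\U^{h^{-1}}$ inside the basic compact-open neighbourhood determined by $h \cdot D$ and $B(p, \epsilon')$, and ultimately forcing $\tc \subseteq \T$, contradicting $\T \subsetneq \tc$. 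For $X$ closed, setting $K = X$ kills case (c) and the inclusion $\V_{D}^{X,\epsilon} \subseteq \{g : g \cdot D \subseteq \bigcup_{l} D_{l}'\}$ yields the contradiction immediately.

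The main obstacle will be case (c) when $X$ is noncompact, since $X \setminus K$ need not lie in a finite disjoint union of disks. I expect to address this by either enlarging the separating system in Lemma \ref{l: mixing disks} so that the cutting arcs also cut off the ends of $X$ into disk-like pieces, or by exhausting $X$ with compact submanifolds $K_{n}$ and combining the resulting dichotomies with the generalized Observation at each stage, so that a hypothetical $g$ with $g \cdot D$ escaping every $K_{n}$ still yields the same sort of contradiction with $\T \subsetneq \tc$.
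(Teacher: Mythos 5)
Your setup matches the paper's: assume the conclusion fails, reduce to $E_{i}\nsubseteq D$, recognize that the hypothesis of Corollary~\ref{c: mixing disks} holds, and invoke the trichotomy. But the route you take from there has two genuine gaps, and both are bypassed in the paper by one missing trick you did not use.

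First, the claim that cases (a) and (b) force $g\cdot D$ into a \emph{finite} prescribed family of disjoint disks $\{D'_{l}\}$ is not justified. Corollary~\ref{c: mixing disks} only gives $diam(g\cdot D)<\epsilon$ in case (b); it does not locate $g\cdot D$ inside any one of finitely many designated components. Inspecting Lemma~\ref{l: mixing disks} does not help: the complement of $h\cdot E_{i}\cup h'\cdot E_{j}$ is cut by a $1$-complex (the core curves and connecting paths), but there is no control on $\partial(h\cdot E_{i})\cap\partial(h'\cdot E_{j})$, which can be infinite, so the small-diameter components can be infinite in number. Consequently even your proposed generalization of Observation~\ref{o: generation compact-open topology} (which you do not prove, and which is itself not trivial to state correctly once the $D'_{l}$ are allowed to be conjugated into a small ball) is not guaranteed to apply. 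Second, and more seriously, you explicitly concede that case (c) is unhandled for noncompact $X$ and offer only a sketch of two alternatives, neither of which obviously works: the separating system in Lemma~\ref{l: mixing disks} lives in a fixed compact $Y$ and cannot be ``enlarged to cut off the ends,'' and exhausting $X$ by compacts $K_{n}$ does not prevent a single $g\in\V$ from sending $D$ outside every $K_{n}$.

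The idea you are missing is that the global ambient object $g_{0}\in H\setminus\V$ fixed at the start of the argument is what disposes of cases (b) and (c) simultaneously. The paper conjugates $D$ by some $h\in G$ so that $D':=h\cdot D\supseteq supp(g_{0})$, chooses $L,\epsilon$ with $\V_{L,\epsilon}$ inside a small symmetric $\V_{1}$ satisfying $g_{0}\notin\V_{1}^{3}$, and applies Corollary~\ref{c: mixing disks} to $D'$. If $g\in\V_{1}\cap\V_{D'}^{L,\epsilon}$ falls in case (b) or (c), then $g g_{0} g^{-1}$ is supported on $g\cdot D'$, which either has diameter $<\epsilon$ or misses $L$; either way $g g_{0} g^{-1}\in\V_{L,\epsilon}\subseteq\V_{1}$, so $g_{0}\in\V_{1}^{3}$, a contradiction. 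Thus only case (a) can occur, namely $g\cdot D'\subseteq D''$ for a fixed larger disk $D''$, and this is exactly the hypothesis ruled out by Observation~\ref{o: generation compact-open topology} as it stands, with no generalization needed. In short: do not try to trap $g\cdot D$ in a finite union of disks and extend the Observation; instead conjugate $D$ over $supp(g_{0})$ and use $g_{0}\notin\V_{1}^{3}$ to eliminate the small and escaping cases.
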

  \begin{proof}
  Suppose that $\V\in\nd$ fails to satisfy the property. Up to making $D$ smaller, we may assume that $E_{i}\nsubseteq D$ for all $1\leq i\leq k$.
  
  Using \ref{f: transitive on disks} take $h\in G$ such that $supp(g_{0})\subseteq h\cdot D=:D'$, let $\V'=\V^{h^{-1}}$ and pick some compact set $L\subset X$ and $\epsilon>0$ such that $\V_{L,\epsilon}\subseteq\V'$ and $N_{\epsilon}(D')$ is contained in a disk $D''$. Let also $\V_{1}=\V_{1}^{-1}\in\nd$ be such that $g_{0}\nin\V_{1}^{3}$ .
  Consider the intersection $\V_{0}:=\V_{1}\cap\V_{D'}^{L,\epsilon}$, where $\V_{D'}^{L,\epsilon}$ is given by Corollary \ref{c: mixing disks} applied to $\V'$ and $D',h\cdot E_{i}$.
  For any given $g\in \V_{0}$ at least one of the following possibilities holds:
  \begin{itemize}
  \item $g\cdot D'\subseteq D''$
  \item $diam(g\cdot D')<\epsilon$
  \item $g\cdot D'\cap L=\emptyset$
  \end{itemize}
  If $g\in\V_{0}$ satisfies the second or third possibility, then $g_{0}^{g^{-1}}\in\V_{L,\epsilon}\subseteq
  \V_{1}$ and thus $g_{0}\cdot\V_{1}^{3}$, contradicting the choice of $\V_{1}$. Hence the first alternative must always hold, contrary to Observation \ref{o: generation compact-open topology}.
  %
  \end{proof}
  
  \begin{lemma}
  \label{l: nerve} For any $\V\in\nd$ and any compact set $L\subseteq X$ there exists some embedded graph $\Gamma$ such that $H^{+}_{\cup\Gamma}\subseteq\V$ and $L$ is contained in the closure of one connected component $U_{0}$ of $X\setminus\bigcup\Gamma$.
  \end{lemma}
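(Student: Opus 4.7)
My plan is to construct $\Gamma$ by tiling (most of) $X$ with small disks in such a way that every component of $X\setminus\bigcup\Gamma$ other than a distinguished $U_{0}$ is either a disk of small diameter or disjoint from a prescribed compact set associated to $\V$, and then to verify $H^{+}_{\cup\Gamma}\subseteq\V$ using the rich supply of elements of $\V$ provided by Lemma \ref{l: spread all over}.

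First, since $\T\subseteq\tc$, there are a compact $K_{0}\subseteq X$ and $\epsilon_{0}>0$ with $\V_{K_{0},\epsilon_{0}}\subseteq\V$. I pick a symmetric $\V_{0}\in\nd$ with $\V_{0}^{N}\subseteq\V$ for $N$ to be determined, together with a compact submanifold $Y$ containing $L\cup K_{0}$ in its interior (using the exhaustion of $X$ by compact submanifolds). Triangulability lets me fix a triangulation $\mathcal{T}$ of $Y$ whose triangles have diameter less than $\epsilon_{0}/10$. For each triangle of $\mathcal{T}$ disjoint from a prescribed open neighborhood of $L$, I put its $1$-skeleton into $\Gamma$. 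This alone leaves a single ``hole'' around $L$ on which $H^{+}_{\cup\Gamma}$ could act freely, including on any $K_{0}$-points inside it; to block this, I interlace additional arcs inside the hole by applying Lemma \ref{l: spread all over} to a family of disks near $L$ and extracting embedded sub-arcs from the resulting spreading translates, cutting the hole into a central region $U_{0}$ whose closure still contains $L$ and some auxiliary pieces controlled by $\V_{0}$.

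For the verification, any $h\in H^{+}_{\cup\Gamma}$ preserves each component of $X\setminus\bigcup\Gamma$ by orientation preservation at $\Gamma$; on the small components it moves $K_{0}$-points by less than $\epsilon_{0}/10$, and on components contained in $X\setminus Y$ it is trivially $K_{0}$-close to the identity. On $\overline{U_{0}}$, one uses the auxiliary spreading arcs together with Lemma \ref{l: close to graph} applied locally near them to express $h|_{\overline{U_{0}}}$ as a bounded product of factors each lying in $\V_{0}$, yielding $h\in\V_{0}^{N}\subseteq\V$.

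\textbf{The main obstacle} is the verification on $U_{0}$: elements of $H^{+}_{\cup\Gamma}$ acting on $\overline{U_{0}}$ need not be small in $\tc$, so the argument must genuinely use the coarseness of $\T$ rather than merely the inclusion $\V_{K_{0},\epsilon_{0}}\subseteq\V$. The auxiliary arcs supplied by Lemma \ref{l: spread all over} must be placed densely enough to force a factorization of $h|_{\overline{U_{0}}}$ in the spirit of Lemma \ref{l: cheese}, while still leaving $L\subseteq\overline{U_{0}}$ and while fitting together as an honest embedded graph (not merely a transverse collection of arcs); balancing these competing demands is the technical heart of the proof.
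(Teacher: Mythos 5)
There is a genuine gap, and you have actually identified it yourself in your last paragraph: the verification of $H^{+}_{\cup\Gamma}\subseteq\V$ on $\overline{U_{0}}$ is not a technical loose end but the whole point of the lemma, and your construction does not set things up so that it can be completed.

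The structural problem is that you center $U_{0}$ on $L$, whereas the paper centers $U_{0}$ on a single small disk $g\cdot D$ produced by Lemma~\ref{l: spread all over}. The paper does \emph{not} put triangle $1$-skeleta into $\Gamma$; instead it builds $\Gamma$ as a dual (nerve) graph encoding how $g\cdot D$ meets the triangulation — one vertex $q_{C}$ in each component $C$ of $T\setminus g\cdot D$ for each triangle $T$ hit by $g\cdot D$, with arcs joining vertices in adjacent components. With that choice, $U_{0}$ is the component of $X\setminus\bigcup\Gamma$ containing $g\cdot D$, and two things hold simultaneously: $K\cup L\subseteq\overline{U_{0}}$ (so $h_{0}^{-1}h$ is supported off $K$ and trivially in $\V_{0}$), and — the key compression step, Claim~\ref{c: out of neighbour} — for any neighbourhood $N$ of $\Gamma$ there is $f\in\V_{K,\epsilon}$ with $f\cdot(U_{0}\setminus N)\subseteq g\cdot D$. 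That compression is what turns the Lemma~\ref{l: close to graph} correction $h_{1}$ into a proof that the residual homeomorphism $h_{1}^{-1}h_{0}$ is conjugate via $g^{-1}f$ to something supported on the tiny disk $D$, hence in $\V_{\epsilon}$. Your $U_{0}$ is a large hole around $L$ with no such small disk sitting inside it absorbing almost all of it, so there is no analogue of the compression step; ``expressing $h|_{\overline{U_{0}}}$ as a bounded product of $\V_{0}$-factors via spreading arcs'' would require a new idea you have not supplied. In short: the triangulation and Lemma~\ref{l: spread all over} are the right ingredients, but they should be used to make $U_{0}$ be the neighbourhood of a spread-out small disk, not the hole around $L$.
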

  \begin{proof}
  Take $\V_{0}=\V_{0}^{-1}\in\nd$ with $\V_{0}^{9}\subseteq \V$. Let $K$ a compact set and $\epsilon>0$ be such that $\V_{K,\epsilon}\subseteq\V_{0}$.  Now, pick some disk $D$ with $diam(D)<\epsilon$ and some triangulation $\mathcal{T}$ of a compact submanifold $Y$ of $X$ containing $N_{\epsilon}(K\cup L)$ in which each triangle has diameter strictly less than $\epsilon$. Applying Lemma \ref{l: spread all over} we can find some
  $g'\in\V_{0}$ such that $g'\cdot D\cap\mathring{T}\neq\emptyset$ for each triangle $T$ in $\mathcal{T}$. Let $Y'$ be a compact submanifold such that $g\cdot D\subseteq int(Y')$, equipped with a triangulation $\mathcal{T}'$ that restricts to $\mathcal{T}$ on $Y$. We may also assume that $g'\cdot \partial D$ is transverse to all the internal edges of $\mathcal{T}'$, so that for any triangle $T\in\mathcal{T}'$ if it intersects $T$ if and only if it intersects $\mathring{T}$.     
  
  Let $F$ be the union of all triangles of $\mathcal{T}'$ intersecting $g\cdot D$ and $\mathscr{U}$ the collection of vertices of $\mathcal{T}'$ that lie in $\mathring{F}$.
  \begin{claim}
  \label{c: surrounding vertices}There is $\phi\in\V_{K,\epsilon}$ such that $\phi g\cdot D$ intersects exactly the same triangles of $\mathcal{T}'$ as $g\cdot D$ and $\mathscr{U}\subseteq \phi g\cdot\mathring{D}$.
  \end{claim}
  \begin{subproof}
    
  Indeed, we can easily find a family of disjoint disks $\{D_{u}\}_{u\in\mathscr{U}}$ contained in $\mathring{F}$ and $\{\phi_{u}\}_{u\in\mathscr{U}}\subseteq H$, where $\phi_{u}$ is supported on $D_{u}$, $u\in\phi_{u}g\cdot D$ and $D_{u}$ is either disjoint from $K$ or is contained in some small neighbourhood of some triangle $T\in\mathcal{T}$ and has diameter less than $\epsilon$. We then let $\phi=\prod_{u\in\mathscr{U}}\phi_{u}$.
  \end{subproof}

  Using Fact \ref{f: transversality} we can slightly perturb $g:=\phi g'$ within $\V_{0}^{2}$ so that $g\cdot\partial D$ is transverse to the edges of $\mathcal{T}'$, while preserving the conclusion of Claim \ref{c: surrounding vertices}.
  
  We construct an embedded graph $\Gamma$ in $X$ as follows. For each triangle $T$ in $\mathcal{T}'$ such that $T\cap g\cdot D\neq\emptyset$ and every connected component $C$ of
  $T\setminus g\cdot D$ we pick a vertex $q_{C}\in C\cap\mathring{C}$ and given two components $C,C'$ in adjacent triangles $T,T'$ satisfying $C\cap C'\neq\emptyset$ add an arc between $v_{C}$ and $v_{C'}$ inside $T\cup T'$ intersecting $C\cap C'$ in a single point. The choice can clearly be made in such a way that two of the resulting arcs can only intersect at a common endpoint. We can assume that $\mathcal{T}$ contains at least $3$-triangles so that no two of the resulting edges can have the same pair of endpoints.
  
  Let $U_{0}$ be the connected component of $X\setminus\bigcup\Gamma$ containing $g\cdot D$.
  
  \begin{claim}
  $K\cup L\subseteq\bar{U_{0}}$
  \end{claim}
  \begin{subproof}
  The inclusion $N_{\epsilon}(K\cup L)\subseteq Y$ implies that for any triangle $T$ in $\mathcal{T}$ with $(K\cup L)\cap T\neq\emptyset$ the triangle $T$ and any triangle in $\mathcal{T}$ adjacent to it must intersect $g\cdot D$ (be contained in $F$) and thus that all the vertices of $T$ must belong to $\mathscr{U}$, since they are all in $int(Y)$. It follows that for each component $C$ of $T\setminus\bigcup g\cdot D$ and every edge $\gamma\in\Gamma$ with $v_{C}$ as an endpoint and crossing some side $\sigma$ of $\partial T$ there is some subarc $\sigma'\subseteq\sigma$ intersecting $g\cdot D$ only in $\sigma'(-1)$ and intersecting $\bigcup\Gamma$ only in $\sigma(1)\in\gamma\cap\sigma$. It follows that any component $V$ of $T\setminus\bigcup\Gamma$ intersects $g\cdot D$ and therefore that $K\cup L\subseteq\bar{U_{0}}$.
  \end{subproof}
  
  \begin{claim}
  \label{c: out of neighbour}For any neighbourhood $N$ of $\bigcup\Gamma$ there exists some $f\in\V_{K,\epsilon}$ such that $f\cdot (U_{0}\setminus N)\subseteq g\cdot D$.
  \end{claim}
  \begin{subproof}
  It is easy to see that $U_{0}\setminus g\cdot D$ is the union of all the connected components of $T\setminus(\bigcup\Gamma\cup g\cdot D)$ bordering $g\cdot D$ as $T$ ranges among all the triangles of $\mathcal{T}'$ that intersect $g\cdot D$ non-trivially but are not contained in $g\cdot D$.
  We construct $f$ as a homeomorphism preserving each of the triangles in $\mathcal{T}'$ intersecting $g\cdot D$ and fixing their complement in $X$.	We can first define $f$ on the $1$-skeleton of $\mathcal{T}'$, fixing $u,v$ and preserving $[u,v]\cap\bigcup\Gamma$ for every edge $[u,v]$ and mapping $[u,v]\setminus N$ into $[u,v]\cap g\cdot D$ and then use \ref{f: arcs within disk} to find an extension to the interior of the triangles with the same property.
  \end{subproof}
  
  Take an arbitrary $h\in H^{+}_{\cup\Gamma}$ and let
  $h_{0}\in H^{+}_{\cup\Gamma}$ the map that agrees with $h$ on $U_{0}$ and is the identity outside of $\bar{V}_{0}$. Let $\delta=\delta(\Gamma,\epsilon)$ be the constant provided by Lemma \ref{l: close to graph}. Continuity of $h_{0}$ implies the existence of a neighbourhood $V$ of $\Gamma$ such that $d(p,h\cdot p)<\delta$ for all $p\in V$. Lemma \ref{l: close to graph} then provides some $h_{1}\in \V_{\epsilon}\cap H\subseteq
  \V_{0}$ agreeing with $h_{0}$ on $N\cup(X\setminus U_{0})$, where $N\subseteq V$ is some neighbourhood of $\Gamma$.
  
  Then Claim \ref{c: out of neighbour} provides some $f\in\V_{\epsilon}\cap H^{+}_{\cup\Gamma}$ such that
  $f\cdot (U_{0}\setminus N)\subseteq g\cdot D$ so that
  $$g^{-1}f\cdot supp(h_{1}^{-1}h_{0})\subseteq g^{-1}f\cdot (U_{0}\setminus N)\subseteq D.$$
  It follows that $h_{0}\in h_{1}\V_{\epsilon}^{g^{-1}f}\subseteq\V_{0}^{8}$. On the other hand, $h_{0}^{-1}h\in H$ is in $\V_{0}$, since its support is disjoint from $K$, and thus $h\in\V_{0}^{9}\subseteq\V$.
  \end{proof}
  
  \begin{corollary}
  \label{c: control on branch points}For any $\V\in\nd$ and any open $U\subseteq X$ there exists some $\mathcal{Q}$-embedded graph $\Gamma$ with $\mathcal{Q}\subseteq U$ such that $H^{+}_{\cup\Gamma}\subseteq\V$. Moreover, we can assume $\Gamma$ to be transverse to any given finite collection of arcs.
  \end{corollary}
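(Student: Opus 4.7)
My plan is to derive the corollary from Lemma \ref{l: nerve} by applying it to produce a first graph $\Gamma_0$ with $H^+_{\cup\Gamma_0}\subseteq\V$, and then using a homeomorphism $\phi\in H$ to transport the finite vertex set $V(\Gamma_0)$ into $U$, setting $\Gamma=\phi\cdot\Gamma_0$. The existence of such a $\phi$ is guaranteed by Fact \ref{f: transitive on disks} applied to small disjoint disks around the points of $V(\Gamma_0)$ and matching disks inside $U$. The resulting graph satisfies $V(\Gamma)=\phi\cdot V(\Gamma_0)\subseteq U$ and $H^+_{\cup\Gamma}=\phi H^+_{\cup\Gamma_0}\phi^{-1}$.

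The crucial point is that this transport argument requires $\phi H^+_{\cup\Gamma_0}\phi^{-1}\subseteq\V$, which is not automatic. I would handle this by applying Lemma \ref{l: nerve} to a strictly smaller neighbourhood $\V_0\in\nd$ chosen so that, for the $\phi$ we end up using, $\phi\V_0\phi^{-1}\subseteq\V$. By continuity of conjugation in the topological group $(G,\T)$, for any $\phi\in H$ the conjugate $\phi^{-1}\V\phi$ is again in $\nd$, and so $\V_0$ can be taken as the intersection of finitely many such sets corresponding to a pre-committed finite family of candidate transporters $\phi_1,\dots,\phi_m\in H$, chosen so that any finite set in $X$ of cardinality at most some bound $N$ is mapped into $U$ by at least one $\phi_i$.

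The main obstacle is the circular dependency between the bound $N$ and the cardinality of $V(\Gamma_0)$, which itself depends on how small $\V_0$ is. Resolving this requires either an a priori bound on $|V(\Gamma_0)|$ coming from the construction in Lemma \ref{l: nerve} (the vertex count being controlled by the triangulation used to build the nerve), or an alternative surgical approach avoiding transport altogether: fix a nice system of disks around $\Gamma_0$ as in Fact \ref{f: nice neighbourhoods}, and at each vertex $q\in V(\Gamma_0)$ trim every incident arc of $\Gamma_0$ just inside $\partial D_q$ and graft a whisker from the trimmed endpoint to a distinct new point of $U$, arranging all whiskers to have pairwise disjoint interiors and to avoid $\cup\Gamma_0\setminus\bigcup_qD_q$. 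The resulting graph $\Gamma$ has $V(\Gamma)\subseteq U$ by construction, and the containment $H^+_{\cup\Gamma}\subseteq\V$ is then verified by applying Lemma \ref{l: close to graph} to correct the local discrepancy near each vertex at the cost of composing with a small element of $H\cap\V$.

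For the moreover part about transversality, once $\Gamma$ has been produced, Fact \ref{f: transversality} applied inside a compact submanifold containing a neighbourhood of $\cup\Gamma$ together with the given finite collection of arcs yields a perturbation of $\Gamma$ by an element of $H^+_{\cup\Gamma}\cap\V$ making $\Gamma$ transverse to the prescribed arcs, while keeping its vertex set inside $U$ and preserving the containment $H^+_{\cup\Gamma}\subseteq\V$.
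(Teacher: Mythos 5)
Your proposal correctly identifies the central obstacle: if you transport $\Gamma_0$ by some $\phi\in H$ to land the vertices in $U$, you must ensure $\phi H^{+}_{\cup\Gamma_0}\phi^{-1}\subseteq\V$, which does not come for free. However, both of your proposed resolutions fall short. Your first route (pre-committing a finite family $\phi_1,\dots,\phi_m$ and shrinking $\V$ accordingly) has the circular dependency you yourself diagnose, and there is no a priori bound on $|V(\Gamma_0)|$ coming from Lemma \ref{l: nerve}: the triangulation used there depends on $K$ and $\epsilon$, which in turn depend on how small $\V_0$ was chosen. Your second route (trim-and-graft with whiskers) is only sketched; after grafting, an element of $H^{+}_{\cup\Gamma}$ need not fix the trimmed-off portions of $\Gamma_0$ inside the disks $D_q$, and if whiskers travel a long way to reach $U$ they reshape the components of $X\setminus\bigcup\Gamma$ in ways that are not obviously controlled by composing with a small element of $\V$.

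The paper sidesteps the circularity entirely by not taking the transporter from $H$ at all, but from a smaller neighbourhood $\V_0\in\nd$ with $\V_0^3\subseteq\V$. This works because of an auxiliary observation, itself derived from Lemma \ref{l: nerve}: for any finite set $\mathcal{F}$, any ball $B$, and any $\W\in\nd$, there exists $g\in\W$ with $g\cdot\mathcal{F}\subseteq B$. (Proof sketch: apply Lemma \ref{l: nerve} to a $\W_0$ with $\W_0^2\subseteq\W$ and $L=\mathcal{F}\cup B$ to get $\Delta$ with $H^+_{\cup\Delta}\subseteq\W_0$ and $\mathcal{F}\cup B$ in the closure of one component $V$; then nudge $\mathcal{F}$ into $V$ by a small $g_0\in\W_0$ and push it into $B$ by some $g_1\in H^+_{\cup\Delta}$ via Fact \ref{f: transitive on disks}.) Once this is in hand, take $\Gamma'$ from Lemma \ref{l: nerve} with $H^{+}_{\cup\Gamma'}\subseteq\V_0$, pick $g\in\V_0$ with $g\cdot V(\Gamma')\subseteq U$, and set $\Gamma=g\cdot\Gamma'$; then $H^{+}_{\cup\Gamma}=g\,H^{+}_{\cup\Gamma'}\,g^{-1}\subseteq\V_0\V_0\V_0^{-1}\subseteq\V$, with no dependency on the size of $V(\Gamma')$. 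Your handling of the transversality part via Fact \ref{f: transversality} is fine and matches the spirit of the paper, which perturbs $g$ slightly within $\V_0$.
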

  \begin{proof}
  We first observe that for any finite set $\mathcal{F}$ of points, any ball $B=B(p,\delta)$ and any
  $\W\in\nd$ there exists some $g\in \W$ such that $g\cdot\mathcal{F}\subseteq B$. Indeed, take
  $\W_{0}=\W_{0}^{-1}\in\nd$ with $\W_{0}^{2}\subseteq\W$ and by Lemma \ref{l: nerve} some embedded graph $\Delta$ such that $H^{+}_{\bigcup\Delta}\subseteq \W_{0}$ and $\mathcal{F}\cup B\subseteq\bar{V}$ for some connected component $V$ of $X\setminus\bigcup\Delta$. Since $\T\subset\tc$ there exists some $g_{0}\in\W_{0}$ such that
  $g_{0}\cdot\mathcal{F}\subseteq U$ and then some $g_{1}\in H^{+}_{\cup\Gamma}$ such that $g_{1}\cdot(g_{0}\cdot\mathcal{F})\subseteq B$ by Fact \ref{f: transitive on disks}.
  
  Let $\V_{0}=\V_{0}^{-1}\in\nd$ be such that $\V_{0}^{3}\subseteq\V$. The previous Lemma provides a $\mathcal{Q}'$-embedded graph $\Gamma'$ such that $H_{\cup\Gamma'}\subseteq\V_{0}$. Since $\V_{0}$ is open in $\tc$ we can take $g\in\V_{0}$ such that $g\cdot \mathcal{Q}\subseteq U$. Up to perturbing $g$ slightly within $\V_{0}$ we may assume $\Gamma:=g\cdot\Gamma'$ is transverse to any given finite set of arcs, while $H^{+}_{\cup\Gamma}=(H_{\cup\Gamma'}^{+})^{g^{-1}}\subseteq\V$.
  \end{proof}
  
  \begin{lemma}
  \label{l: many point pushing maps}Let $\alpha$ be an arc and $\epsilon>0$. Then for any $\V\in\nd$ we have $\p_{\epsilon}(\alpha)\cap\V\neq\emptyset$..
  \end{lemma}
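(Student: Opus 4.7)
The plan is to find an embedded graph $\Gamma$ with $H^{+}_{\cup\Gamma}\subseteq\V$ and $\bigcup\Gamma\cap\alpha=\emptyset$. Once such a $\Gamma$ is in hand, compactness provides a disk $D$ with $\alpha\subseteq\mathring{D}\subseteq D\subseteq N_{\epsilon}(\alpha)$ and $D\cap\bigcup\Gamma=\emptyset$, obtained as a rectangular extension of $\alpha$ of sufficiently small width via Fact \ref{f: arcs within disk}. Then any $f\in\p_{\epsilon}(\alpha)$ supported in $\mathring{D}$, which exists in $H$ by the existence of rectangular extensions together with Facts \ref{f: transitive on disks} and \ref{f: alexander}, has support disjoint from $\bigcup\Gamma$ and is vacuously orientation preserving at $\Gamma$, so $f\in H^{+}_{\cup\Gamma}\subseteq\V$ and we are done.

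To construct such a $\Gamma$, I would revisit the proof of Lemma \ref{l: nerve} and exploit the freedom available there. Fix symmetric $\V_{0}\in\nd$ with $\V_{0}^{9}\subseteq\V$. That proof selects a small auxiliary disk $D'$, a triangulation $\mathcal{T}'$ of a compact submanifold, and an element $g\in\V_{0}$ such that $g\cdot D'$ meets every triangle; the graph $\Gamma$ is then assembled by choosing a vertex $v_{C}$ inside each connected component $C$ of $T\setminus g\cdot D'$ (for triangles $T$ adjacent to $g\cdot D'$) and connecting pairs of such vertices in adjacent triangles by arcs crossing the shared portion of $g\cdot\partial D'$ in a single point. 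Using Fact \ref{f: transversality} one may arrange $g\cdot\partial D'$ and the $1$-skeleton of $\mathcal{T}'$ to be transverse to $\alpha$, so that $\alpha\cap g\cdot\partial D'$ is finite and $\alpha$ meets each triangle in finitely many arcs. Each vertex $v_{C}$ can then be picked in $C\setminus\alpha$ (possible since $\alpha$ is one-dimensional and $C$ is two-dimensional), the crossing point of each edge with $g\cdot\partial D'$ can be placed off $\alpha$, and each edge can be routed inside its triangle pair so as to avoid the finitely many arcs of $\alpha\cap(T\cup T')$ found there.

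The main obstacle is the final routing step: within some triangle pair $T\cup T'$, the arcs of $\alpha\cap(T\cup T')$ might disconnect the candidate vertices $v_{C}$ and $v_{C'}$, forcing any arc between them to cross $\alpha$. I would handle this by first refining $\mathcal{T}'$ so that each relevant triangle pair meets $\alpha$ in at most a single simple arc, and then coordinating the choices of the $v_{C}$ globally so that adjacent ones lie on the same side of $\alpha$, exploiting the fact that the complement of a simple arc in a triangle pair retains enough connectivity for the required routing. With these refinements the remaining arguments in the proof of Lemma \ref{l: nerve} apply verbatim, yielding $\Gamma$ with $H^{+}_{\cup\Gamma}\subseteq\V$ and $\bigcup\Gamma\cap\alpha=\emptyset$ as needed.
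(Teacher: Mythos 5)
The core of your plan is to produce an embedded graph $\Gamma$ with $H^{+}_{\cup\Gamma}\subseteq\V$ and $\bigcup\Gamma\cap\alpha=\emptyset$: once such $\Gamma$ exists, your concluding step (a rectangular extension $D\supseteq\alpha$ disjoint from $\bigcup\Gamma$, then a point-push supported in $D$) is correct. The gap is that you never actually establish that such a $\Gamma$ exists, and this is precisely the crux of the lemma. Corollary \ref{c: control on branch points} gives $\Gamma$ with branch points away from $\alpha$ and \emph{transverse} to $\alpha$, but transverse is very far from disjoint; and nothing in the proof of Lemma \ref{l: nerve} promises you control over \emph{where} $\bigcup\Gamma$ lies relative to a prescribed arc. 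The obstruction you flag yourself -- that arcs of $\alpha$ inside a triangle pair can separate the required vertices and crossing points -- is genuine, and the proposed remedies (refining $\mathcal{T}'$, ``coordinating the choices of the $v_{C}$ globally'') are not arguments. Refining so each triangle pair meets $\alpha$ in one arc does not help because a single cell $C$ of $T\setminus g\cdot D'$ can still be cut by $\alpha$ into several pieces, and the edges emanating from $v_{C}$ must reach crossing segments $C\cap C'$ that may be distributed among both sides, so no placement of $v_{C}$ serves all of them. ``Coordinating globally'' is a $2$-colouring of the cell adjacency graph, and you give no reason such a colouring is consistent; since $\alpha$ does not separate $X$, ``side of $\alpha$'' is not even globally defined, so consistency is exactly the kind of cocycle condition one would have to verify and cannot wave away.

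The paper avoids this issue entirely and asks for much less: it never makes $\Gamma$ disjoint from $\alpha$. Instead it fixes a small quadrilateral $D$ with $p$ on one side $\beta_{1}$ and $q$ on the opposite side $\beta_{3}$, observes the trichotomy (either $\Gamma$ has a vertex in $D$, or some component of $D\setminus\bigcup\Gamma$ joins $\beta_{1}$ to $\beta_{3}$, or one joins $\beta_{2}$ to $\beta_{4}$), and kills the third alternative by intersecting $\V$ with $\V^{f}$ for a quarter-turn $f$ of $D$. This produces a corridor and an auxiliary arc $\kappa$ from $p'\approx p$ to $q'\approx q$ that lies in $D\setminus\bigcup\Gamma$; crucially $\kappa\neq\alpha$, and it is allowed to thread between the strands of $\Gamma$. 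A point-push along $\kappa$ is then vacuously in $H^{+}_{\cup\Gamma}\subseteq\V_{0}$, and a small conjugation repairs the endpoints to $(p,q)$. Your proposal replaces this navigation step with the unproved (and likely false) assertion that the whole arc $\alpha$ can be made disjoint from $\Gamma$; as it stands, the argument does not go through.
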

  \begin{proof}
  First consider any disk $D$ whose boundary is subdivided into four sides $\beta_{i}, 1\leq i\leq 4$, where $\beta_{i}$  is adjacent to $\beta_{i+1}$ (cyclically).
  If $\Gamma$ is a $\mathcal{Q}$-embedded graph transverse to the $\beta_{i}$, then it is easy to see that at least one of the following alternative holds:
  \begin{itemize}
  \item $D\cap\mathcal{Q}\neq\emptyset$
  \item some connected component of $D\setminus\bigcup\Gamma$ intersects
  $\beta_{1}(\mathring{I})$ and $\beta_{3}(\mathring{I})$
  \item some connected component of $D\setminus\bigcup\Gamma$ intersects
  $\beta_{2}(\mathring{I})$ and $\beta_{4}(\mathring{I})$
  \end{itemize}
  \begin{claim}
  For any $\V\in\nd$ there exists some embedded graph $\Gamma$ with $H^{+}_{\cup\Gamma}\subseteq\V$ which is transverse to the $\beta_{i}$ and satisfies the second alternative above.
  \end{claim}
  \begin{subproof}
  Suppose not. Notice that by Fact \ref{f: transitive on disks} there exists some $f\in H$ preserving $D$ and mapping $im(\beta_{i})$ homeomorphically onto $im(\beta_{i+1})$ (cyclically). If we let $\V_{0}=\V\cap\V^{f}$, then
  for any $\Gamma$ transverse to the $\beta_{i}$ such that
  $H^{+}_{\cup\Gamma}\subseteq\V_{0}$ necessarily the first alternative must take place, but this contradicts Corollary \ref{c: control on branch points}.
  \end{subproof}
  Now, given $\alpha$ and $\epsilon>0$ as in the premise,
  choose some embedded disk $E$ with $\alpha\subseteq\mathring{E}\subseteq E\subseteq N_{\epsilon}(\alpha)$, $\V_{0}=\V_{0}^{-1}\in\nd$ with $\V_{0}^{3}\subseteq\V$ and $\delta>0$ such that
  \begin{itemize}
  \item $B(p,\delta),B(q,\delta)$ are connected
  \item $B(p,\delta)\cup B(q,\delta)\subseteq E$
  \item $\V_{2\delta}\subseteq\V_{0}$
  \item $d(p,q)\geq 3\delta$
  \end{itemize}
  Now choose an embedded disk $D\subseteq\mathring{E}$ bounded by $\{\beta_{i}\}_{i=1}^{4}$ as above so that, $p\in\beta_{1}(\mathring{I})$, $q\in\beta_{3}(\mathring{I})$ and $diam(\beta_{1}),diam(\beta_{3})<\delta$.
  
  If $\Gamma$ satisfies $H^{+}_{\cup\Gamma}\subseteq\V_{0}$ and is as in the second alternative, then for $\eta>0$ small enough and some arc $\kappa$ in $D$ from some $p'\in\beta_{1}(\mathring{I})$ to $q'\in\beta_{3}(\mathring{I})$ we have $N_{\eta}(im(\kappa))\subseteq E\setminus\bigcup\Gamma$.
  On the other hand, there is	$h$ supported in the disjoint union $B(p,\delta)\cap B(q,\delta)$ such that $h\cdot(p,q)=(p',q')$. Then $h\in\V_{2\delta}$ and
  $$\p_{\eta}(\kappa)^{h}\subseteq (H^{+}_{\cup\Gamma})^{h}\subseteq\V_{0}^{h}\subseteq\V,$$
  while on the other hand
  $\emptyset\neq\p_{\eta}(\kappa)^{h}\subseteq\p_{\epsilon}(\alpha)$.
  
  \end{proof}
  
\section{Untangling embedded graphs}
  
  \label{untangling graphs}

  \begin{lemma}
  \label{l: bigon}Let $\Q$ be a finite set of points, $\Gamma=(\gamma_{1},\dots \gamma_{k})$ a $\mathcal{Q}$-embedded graph, $Y$ a compact subsurface of $X$ such that $\Gamma$ is transverse to $\partial Y$ and let $g\in H$ supported on $Y$ be such that $g_{\restriction Y}\in\H_{0}(Y,\partial Y)$ and
  $\Gamma$ and $\Gamma':=g\cdot \Gamma$ are transverse on $int(Y)$.
  Then for any $\V\in\nd$ there exists $h_{0}\in H^{+}_{[\Gamma]}$ and $h_{1}\in\V$ supported on $Y$ such that $h_{0}h_{1}\cdot\Gamma\simeq\Gamma'$.
  \end{lemma}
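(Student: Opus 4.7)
The plan is a bigon-removal argument in two phases: first, a sequence of small point-pushes in $\V$ disentangles $\Gamma$ from $\Gamma'$ in $int(Y)$; then, a homeomorphism supported off $\bigcup\Gamma$ (hence in $H^+_{[\Gamma]}$) completes the matching.

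Let $m = |\bigcup\Gamma \cap \bigcup\Gamma' \cap int(Y)|$ and choose a symmetric $\V_0 \in \nd$ with $\V_0^{m+1} \subseteq \V$. Since $g_{\restriction Y} \in \H_0(Y, \partial Y)$, each $\gamma'_j = g\gamma_j$ is homotopic rel endpoints to $\gamma_j$ in $Y$; in particular, it admits a representative in its homotopy class disjoint from every $\gamma_i$ on $int(Y)$. Whenever $\bigcup\Gamma$ and $\bigcup\Gamma'$ still meet in $int(Y)$, Fact \ref{f: bigon removals} then furnishes an innermost bigon $D$ bounded by subarcs of some $\gamma_i$ and $\gamma'_j$ with interior disjoint from $\bigcup\Gamma \cup \bigcup\Gamma'$. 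Using Lemma \ref{l: many point pushing maps} I would take a point-pushing homeomorphism $\phi \in \p_\delta(\alpha) \cap \V_0$ supported in an arbitrarily small neighborhood of $D$, for a suitable arc $\alpha$ crossing $D$, whose net effect is to push the subarc of $\gamma_i$ bounding $D$ across $D$ to the far side of the subarc of $\gamma'_j$. Because the support sits inside $D$ (whose interior is clean of $\bigcup\Gamma \cup \bigcup\Gamma'$), this removes exactly the two intersection points bounding $D$ without creating new ones. Iterating at most $m/2$ times produces $h_1 \in \V_0^{m/2} \subseteq \V$ supported on $Y$ such that $h_1 \cdot \Gamma$ is disjoint from $\Gamma'$ in $int(Y)$. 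By choosing the push direction carefully at each step, I would additionally arrange that $h_1 \cdot \Gamma$ avoids $\bigcup\Gamma$ itself in $int(Y)$ and lies, within each component of $Y \setminus \bigcup\Gamma$ met by $\Gamma'$, on the same side of $\bigcup\Gamma$ as $\Gamma'$.

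For the second phase, for each $i$ the arcs $h_1 \gamma_i$ and $\gamma'_i$ share endpoints, are disjoint in $int(Y)$, and are isotopic rel endpoints in $Y$, so they cobound an embedded disk $D_i \subseteq Y$. The side-control arranged above guarantees that each $D_i$ lies inside a single component of $Y \setminus \bigcup\Gamma$, hence has interior disjoint from $\bigcup\Gamma$. Fact \ref{f: arcs within disk} applied inside each $D_i$ yields a homeomorphism $\psi_i$ of $D_i$ fixing $\partial D_i$ and carrying $h_1 \gamma_i$ onto $\gamma'_i$ as oriented arcs; the extension by the identity is in $H$ by Fact \ref{f: alexander}. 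Setting $h_0 = \prod_i \psi_i$, the supports are pairwise disjoint and disjoint from $\bigcup\Gamma$, so $h_0$ fixes $\bigcup\Gamma$ pointwise and is trivially orientation preserving at $\Gamma$; that is, $h_0 \in H^+_{\cup\Gamma} \leq H^+_{[\Gamma]}$, and $h_0 h_1 \cdot \Gamma \simeq \Gamma'$ by construction.

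The main obstacle is the side-control in Phase 1: arranging the bigon-removing pushes so that $h_1 \cdot \Gamma$ becomes disjoint not only from $\Gamma'$ but also from $\bigcup\Gamma$ itself, and lies on the prescribed side. This relies on the flexibility afforded by Lemma \ref{l: many point pushing maps} to draw each push from $\p_\delta(\alpha) \cap \V_0$ with $\delta$ arbitrarily small, which localizes each deformation inside a narrow tubular neighborhood of the relevant bigon so that its direction can be steered. A secondary technical point is to re-achieve transversality between $h_1 \cdot \Gamma$ and $\Gamma'$ after each push (using Fact \ref{f: transversality} within $\V_0$), so that the bigon lemma can be applied anew at the next step.
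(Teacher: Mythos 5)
Your Phase~1 hinges on the assertion that a point-push $\phi\in\p_{\delta}(\alpha)\cap\V_{0}$, taken along an arc $\alpha$ crossing an innermost bigon $D$, ``removes exactly the two intersection points bounding $D$ without creating new ones.'' That does not follow from membership in $\p_{\delta}(\alpha)$. By definition, $\p_{\delta}(\alpha)$ only constrains $\phi$ to be supported in \emph{some} disk $F$ with $\alpha\subseteq\mathring{F}\subseteq N_{\delta}(\alpha)$ and to take $\alpha(-1)$ to $\alpha(1)$; within $F$, $\phi$ is an arbitrary homeomorphism fixing $\partial F$, and Lemma \ref{l: many point pushing maps} only supplies membership in $\V_{0}$, with no control over crossings. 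Such a $\phi$ can drag $\gamma_{i}\cap F$ back and forth across $\gamma'_{j}\cap F$ inside $F$, creating new bigons, so the intersection count need not drop. This is exactly the pitfall the paper flags in the remark at the end of its proof of this lemma: ``We may not have simply chosen $\omega$ to be a point-pushing map along a path crossing $\beta_{i}$ and dispense of $\theta$ altogether, since the condition $\phi\in N_{\epsilon}(\omega)$ is too weak to guarantee multiplying on the left by $\phi$ creates only one new bigon.''

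The paper's argument is built precisely to avoid this. It runs the induction on $N$ by modifying $g$ (not $\Gamma$) by an $f$ supported on a disk around the innermost bigon, then transfers $f$ across the inductive output via conjugation; the only point-push occurs inside the key claim, where it is composed with an auxiliary map $\theta$ supported on a tiny disk $E$ that is in $\V_{0}$ purely by size, and it is $\theta$ that arbitrates the crossing count --- the raw point-push merely moves a subarc into $E$. Separately, your proposed ``side control'' (that $h_{1}\cdot\Gamma$ additionally avoid $\bigcup\Gamma$ in $int(Y)$ and land on the prescribed side of each component) is asserted but not produced by the bigon machinery applied to $\Gamma$ and $\Gamma'$; it would need an independent argument, and it is one reason the paper does not attempt to realize the endgame with $h_{0}\in H^{+}_{\cup\Gamma}$ but only $h_{0}\in H^{+}_{[\Gamma]}$.
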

  \begin{proof}
  We prove the result by induction on $N:=|(\bigcup\Gamma)\cap(\bigcup\Gamma')\cap int(Y)|<\infty$.
  Pick some $\V_{0}=\V_{0}^{-1}\in\nd$ such that $\V_{0}^{3}\subseteq\V$.
  
  Let $\mathcal{A}$ be the collection of maximal subarcs of some $\gamma_{i}$ contained in $Y$. Notice that any two distinct
  $\alpha,\alpha'\in\mathcal{A}$ satisfy $\alpha\cap\alpha'=\emptyset$.
  If $N=0$ then for all $\alpha\in\mathcal{A}$ we have that $\alpha\cup g\circ\alpha$ is the boundary of some disk $D\subseteq Y$ with $\mathring{D}\cap(\bigcup\Gamma\cup\bigcup\Gamma')=\emptyset$.
  Then for any $\epsilon>0$ one can easily find some $h_{1}\in\V_{\epsilon}\cap H$ supported on $Y$ such that $h_{1}\cdot\alpha$ lies inside the closed bigon bounded by $\alpha$ and $\alpha'$ for all $\alpha\in\mathcal{A}$ and then some $h_{0}\in H^{+}_{\cup\Gamma}$ such that $h_{0}h_{1}\cdot\Gamma\simeq\Gamma'$.
  
  Assume now that $N>0$. Then by \ref{f: bigon removals} there are $1\leq i,j\leq k$ and a bigon $B$ in $Y$ whose interior is disjoint from $\Gamma\cup\Gamma'$ and which is bounded by the union of a subarc of $\gamma_{i}$ and a subarc of $\gamma'_{j}$ meeting only at the endpoints.
  
  It is easy to find a disk $D$ with $B\subseteq\mathring{D}$ intersecting
  $\Gamma$  in a single subarc $\alpha_{i}\subset\gamma_{i}$ and $\Gamma'$ in a single subarc  $\alpha_{j}'\subset\gamma'_{j}$ and containing no other points from $\bigcup\Gamma\cap\bigcup\Gamma'$ other than the two intersection points in $\partial B$. Then Fact \ref{f: arcs within disk} yields some $f$ supported on $D$ such that $f^{-1}\cdot\alpha'_{j}$ and $\alpha_{i}$ do not cross.
  
  We can then apply the induction hypothesis to $f^{-1}g$ and $\V_{0}$. We obtain $h_{0}'\in H^{+}_{[\Gamma]}$ and $h'_{1}\in\V_{0}$ supported on $Y$ such that $f^{-1}g\cdot\Gamma=h_{0}'h_{1}'\cdot\Gamma$.
  
  We claim that there exists $h''_{0}\in H^{+}_{[\Gamma]}$, $\tilde{h}\in H_{[h_{1}\cdot\Gamma]}$, $h''_{1}\in\V_{0}^{2}$ supported on $Y$ such that $f^{h_{0}'}=h_{0}''h''_{1}\tilde{h}$. A simple calculation then shows that $h_{0}:=h'_{0}h''_{0}\in H^{+}_{[\Gamma]}$ and $h_{1}:=h''_{1}h'_{1}\in\V_{0}^{3}\subseteq\V$ verify the properties we need.
  
  Notice that $f':=f^{h_{0}'}$ is supported on $D'=(h_{0}')^{-1}\cdot D$. It brings the arc
  $\beta''_{j}:=(fh_{0}')^{-1}\cdot\alpha'_{j}\subseteq h'_{1}\cdot\gamma_{j}\cap D$, which is disjoint from the arc $\beta_{i}:=(h_{0}')^{-1}\cdot\alpha_{i}\subseteq\gamma_{i}\cap D$, to a position in which it intersects the latter in exactly two points, creating a bigon inside $\mathring{D}'$. The proof now reduces to the following:
  \begin{claim}
  There is $h''_{1}\in\V_{0}^{2}$ supported in $D'$ bringing, $\beta''_{j}$ to a curve intersecting $\beta_{i}$ in exactly two points (bounding a bigon).
  \end{claim}
  Indeed, once such $h''_{1}$ is given, we first see using Fact \ref{f: arcs within disk} the existence of $\tilde{h}\in H^{+}_{h'_{1}\cdot\Gamma}$ supported on $D'$ such that
  $$(h''_{1}\tilde{h})^{-1}(\beta_{i})\cap\beta''_{j}=\tilde{h}^{-1}((h''_{1})^{-1}(\beta_{i})\cap\beta''_{j})=f'^{-1}(\beta_{i})\cap\beta''_{j}$$ and then the existence of
  $h''_{0}\in H^{+}_{[\Gamma]}$ such that $h''_{0}h''_{1}\tilde{h}=f'$ easily follows from the same fact.
  
  To prove the claim pick some disk $E$ in $D'$ which is divided into two smaller disks by an arc of $\beta_{i}$
  and has a diameter small enough so that any homeomorphism supported on $E$  belongs to $\V_{0}$.
  Choose points $p\in\beta''_{j}$, $q\in \mathring{E}$ in the same connected component $C_{0}$ of $\mathring{D'}\setminus\beta_{i}$, as well as some arc $\omega$ in $C_{0}$ from
  $p$ to $q$.
  
  Let $\epsilon>0$ be small enough that $N_{\epsilon}(\omega)\subseteq C_{0}$.
  By Lemma \ref{l: many point pushing maps} there exists some $\phi\in\p_{\epsilon}(\omega)\cap\V_{0}$.
  The arc $\phi\cdot\beta''_{j}$ does not intersect $\beta_{i}$, but it intersects $\mathring{E}$, so it is easy to see that we can choose some $\theta$ supported on $E$ such that
  $h''_{1}:=\theta\phi\in\V_{0}^{2}$ brings $\beta_{j}'$ into the desired configuration (we may assume $\partial E$ and $\phi\cdot\beta''_{j}$ are transverse).
  \begin{remark}
  We may not have simply chosen $\omega$ to be a point-pushing map along a path crossing $\beta_{i}$ and dispense of $\theta$ altogether, since the condition $\phi\in N_{\epsilon}(\omega)$ is too weak to guarantee multiplying on the left by $\phi$ creates only one new bigon.
  \end{remark}
  \end{proof}

  \begin{lemma}
  \label{l: flexible stabilizer} For any $\epsilon>0$ and any embedded graph $\Gamma=(\gamma_{j})_{j=1}^{k}$ we have
  $$H^{+}_{[\Gamma]}\subseteq\V_{\epsilon}H^{+}_{\cup\Gamma}\V_{\epsilon}H^{+}_{\cup\Gamma}.$$
  \end{lemma}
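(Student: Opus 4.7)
My plan is to construct the decomposition $h = f_1 g_1 f_2 g_2$ explicitly, by working locally in a nice system of disks around $\Gamma$. Since $h \in H^{+}_{[\Gamma]}$ preserves each arc $\gamma_j$ setwise with orientation, it restricts to a monotone reparametrization $\phi_j \colon I \to I$ fixing $\pm 1$, which may slide points along $\gamma_j$ by an arbitrarily large distance. No single element of $\V_\epsilon$ can realise such a slide on $\bigcup\Gamma$, but the composition $g_1 \circ f_2$ can: the small $f_2 \in \V_\epsilon$ first pushes $\gamma_j$ slightly off itself to a chosen local side; the ``shear'' $g_1 \in H^{+}_{\cup\Gamma}$ then slides the displaced curve along that side from above $\gamma_j(t)$ to above $\gamma_j(\phi_j(t))$ while rigidly fixing $\Gamma$; and a final small $f_1 \in \V_\epsilon$ pulls the result back onto $\gamma_j$. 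The crucial point is that the long-range moves made by $g_1$ act on points lying off $\Gamma$, so the constraint $g_1|_{\cup\Gamma}=\mathrm{id}$ does not obstruct them.

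Concretely, I would fix a nice system $\mathcal{N}=(\{D_q\},\{D_\gamma\},\{\theta_\gamma\})$ around $\Gamma$ with all disks of diameter less than $\epsilon/3$ (Fact \ref{f: nice neighbourhoods}). On each $D_\gamma$, working in the coordinates $\theta_\gamma \colon I \times I \to D_\gamma$, define $f_2$ as a shift $(s,t)\mapsto (s+\eta_\gamma(s,t),\,t)$ for a small positive bump $\eta_\gamma$ supported away from $\partial(I\times I)$ with $\eta_\gamma(0,\pm 1)=0$; define $g_1$ as a shear $(s,t)\mapsto (s,\,\sigma_\gamma(s,t))$ realised by convex interpolation so that $\sigma_\gamma(0,t) = \sigma_\gamma(\pm 1, t) = t$ but $\sigma_\gamma(\eta_\gamma(0,t),t) = \phi_j(t)$ (e.g.\ $\sigma_\gamma(s,t) = (1-\lambda(s,t))t + \lambda(s,t)\phi_j(t)$ for a $[0,1]$-valued cut-off $\lambda$); and define $f_1$ as the dual pull-back $(s,t)\mapsto (s-\rho(s)\,\eta_\gamma(0,\phi_j^{-1}(t)),\,t)$ for an appropriate cut-off $\rho$. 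Extending all three by the identity outside $\bigcup\mathcal{N}$, a direct computation on the coordinate curve $\{(0,t)\}$ yields $(f_1 g_1 f_2)(\gamma_j(t)) = \gamma_j(\phi_j(t)) = h(\gamma_j(t))$, so $g_2 := (f_1 g_1 f_2)^{-1} h$ is the identity on $\bigcup\Gamma$ and, by the local shape of each factor, preserves local sides near $\Gamma$; hence $g_2 \in H^{+}_{\cup\Gamma}$. The diameter control on the $D_\gamma$ forces $f_1,f_2 \in \V_\epsilon$, while $g_1 \in H$ by Fact \ref{f: alexander}.

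The main obstacle I anticipate is coherence at the vertices of $\Gamma$, where several arcs meet and the tubular constructions on adjacent $D_\gamma$ must glue across the vertex disks $D_q$. Since $\phi_j(\pm 1)=\pm 1$, $h$ is the identity at each $q \in V(\Gamma)$, and orientation-preservation of $h$ at $\Gamma$ forces each sector of $D_q\setminus \bigcup_j \gamma_{j,q}$ to be preserved; one can therefore run an analogous push--shear--pull construction inside $D_q$, assigning to each incoming arc a dedicated sector. The tapering $\eta_\gamma(0,\pm 1)=0$ lets the $D_\gamma$-pieces glue continuously to the $D_q$-pieces across the interfaces $D_q \cap D_\gamma$, yielding genuine homeomorphisms of $X$; a final routine check confirms that $g_2$ preserves local sides near $\Gamma$ and so lies in $H^{+}_{\cup\Gamma}$.
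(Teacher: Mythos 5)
Your push--shear--pull factorization $h = f_1 g_1 f_2 g_2$ with small $f_1,f_2$ and $g_1,g_2 \in H^{+}_{\cup\Gamma}$ is exactly the mechanism the paper uses: the paper also pushes slightly displaced copies $\hat\gamma_j$ of the middle of each arc off $\bigcup\Gamma$ by a small $g_1 \in \V_\delta$, carries out the long-range sliding off $\Gamma$ by an element $g_2\in H^{+}_{\cup\Gamma}$ (in the paper this is extended from a rectangular neighbourhood disjoint from $\bigcup\Gamma$), and recaptures the reparametrization on $\bigcup\Gamma$ via the conjugate $g_1^{-1}g_2 g_1$. The remaining factors $\phi^{-1}g_3$ in the paper are precisely the vertex corrections you gesture at. So the decomposition is the same; the proof writes the ``pull'' as a conjugation by the ``push'' rather than a third explicit homeomorphism.

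Where your sketch is thinnest, and where the paper invests real work, is exactly the interface issue you flag at the end: as written, your $\eta_\gamma$ vanishes near $t=\pm 1$, so $f_2$ fixes $\gamma_j$ near the arcs $D_q\cap D_\gamma$, and then $\sigma_\gamma(\eta_\gamma(0,t),t)=\sigma_\gamma(0,t)=t$ there, so $f_1 g_1 f_2$ cannot realize a reparametrization $\phi_j$ with $\phi_j(t)\ne t$ near those interfaces; your formula for $f_1$ only moves the $s$-coordinate, so it cannot compensate. This is not a small patch---one genuinely has to taper the reparametrization down to the identity near the vertex disks by a $\V_\epsilon$-small correction (the paper's $\phi$), carry out the long motion on the tapered arcs $\hat\gamma_j$, and then fix up inside the $D_q$ with another small element (the paper's $g_3$), all while keeping the intermediate homeomorphisms in $\V_\epsilon$. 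Also note that demanding \emph{all} disks in the nice system have diameter $<\epsilon/3$ is impossible when $\Gamma$ is long: only the vertex disks $D_q$ can be made small. What you actually need is that $\bigcup\mathcal{N}\subseteq N_{\epsilon'}(\bigcup\Gamma)$ so that transverse displacements inside each $D_\gamma$ are $\epsilon$-small (as the paper arranges), rather than diameter control on the $D_\gamma$ themselves. Likewise the convex-interpolation formula for $\sigma_\gamma$ does not obviously give a homeomorphism when $\lambda$ depends on $t$, but this is a routine fix. With these points addressed, your argument becomes the paper's argument.
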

  \begin{proof}
  Let $\mathcal{Q}=V(\Gamma)$. It suffices to show that for any homeomorphism $h$ of $\cup\Gamma$ preserving the arcs of $\Gamma$ with orientation and fixing their endpoints there is some $g\in\V_{\epsilon}H^{+}_{\cup\Gamma}\V_{\epsilon}$ such that $g_{\restriction \Gamma}=h$.
  
  Fact \ref{f: nice neighbourhoods} there exists some family $\{D_{q}\}_{q\in\Q}$ of disks with $q\in\mathring{D}_{q}$ and $\delta\in(0,\frac{\epsilon}{10})$ such that $D_{q}$ intersects $\Gamma$ in a star,
  $diam(D_{q})<\frac{\epsilon}{4}$ and $B(q,5\delta)\subseteq D_{q}$.
  
  By continuity of $h$ there are subarcs
  $\hat{\gamma}_{j}\subseteq\gamma_{j}\setminus\mathcal{Q}$ such that:
  $$
  \gamma_{j}\setminus\hat{\gamma}_{j}\cup h\cdot(\gamma_{j}\setminus\mathring{\gamma_{j}})\subseteq N_{\delta}(\gamma_{j}(\{-1,1\}))
  $$
  By Facts \ref{f: extensions around branching points} and \ref{f: nice neighbourhoods} there exist some
  $\phi_{j}\in H$ supported in some small neighbourhood of $\bigcup_{j}\gamma_{j}\setminus\hat{\gamma}_{j}$
  such that $\phi_{j}\circ h$ preserves $\gamma_{j}$ and fixes
  the endpoints of $\hat{\gamma}_{j}$ (thus preserving $\hat{\gamma}_{j}$ as well). It is not difficult to see that one can choose $\phi_{j}\in\V_{4\delta}$ with disjoint supports, so that $\phi:=\prod_{j=1}^{k}\phi_{j}\in\V_{4\delta}$.
  
  The following easy consequence of Fact \ref{f: nice neighbourhoods} is left to the reader.
  \begin{claim}
  There exists some $g_{1}\in\V_{\delta}$ such that $g_{1}\cdot\hat{\gamma}_{j}\cap\bigcup\Gamma=\emptyset$ for all $1\leq j\leq k$.
  \end{claim}
  
  Consider now the partial homeomorphism $h'=(g_{1}\phi \circ h\circ g_{1}^{-1})_{\restriction \bigcup_{j}\hat{\gamma}_{j}}$.
  Notice that the fact that $g_{1}\cdot\gamma_{j}$ admits a rectangular extension implies the existence of some disk $D_{j}$
  disjoint from $\bigcup\Gamma$, with $\partial D_{j}$ transverse to $g_{1}\cdot\gamma_{j}$ and
  $D_{j}\cap g_{1}\cdot\gamma_{j}=g_{1}\cdot\hat{\gamma}_{j}$. Together with Fact \ref{f: arcs within disk}
  this implies that there is some $g_{2}\in H$ which extends $h'$, is orientation preserving at each arc $g_{1}\cdot\hat{\gamma}_{j}$ and is supported in the complement of $\bigcup\Gamma\cup\bigcup_{j=1}^{k}g_{1}\cdot(\gamma_{j}\setminus\hat{\gamma}_{j})$.
  In particular $g_{2}^{g_{1}}$ is the identity on $\bigcup_{j=1}^{k}(\gamma_{j}\setminus\hat{\gamma}_{j})$.
  
  On the one hand, $\phi\in\V_{4\delta}$ so for $\eta\in\{1,-1\}$ the component of $\gamma_{j}\setminus\hat{\gamma}_{j}$ in $B(\gamma_{j}(\eta),\delta)$ is mapped by $\phi$ into $B(\gamma_{j}(\eta),5\delta)\subseteq \mathring{D}_{q}$. On the other hand, $\hat{\gamma}_{j}$, so the image of said component is disjoint from $\bigcup_{j}\hat{\gamma}_{j}$. It follows from Fact \ref{f: extensions around branching points} that there exists some $g_{3}\in H$ supported on $\bigcup_{q\in\Q}D_{q}$ fixing $\bigcup_{j=1}^{k}\hat{\gamma}_{j}$ such that $(g_{3})_{\restriction S}=\phi_{\restriction S}$.
  Notice that $g_{3}\in\V_{\frac{\epsilon}{4}}$.
  The element  $\phi^{-1}g_{3}g_{1}^{-1}g_{2}g_{1}\in\V_{4\delta}\V_{\frac{\epsilon}{2}}\V_{\delta}H^{+}_{\cup\Gamma}\V_{\delta}\subseteq\V_{\epsilon}H^{+}_{[\Gamma]}\V_{\epsilon}$ agrees with $h$ on the entire $\bigcup\Gamma$ and is orientation preserving at $\Gamma$ so we are done.
  \end{proof}

\section{Concluding the proof}
  \label{conclusion}
  
  \begin{proof}
  [\textit{Proof} of Theorem \ref{t: main}]
  Pick some $\V_{0}=\V_{0}^{-1}\in\nd$ such that our fixed element $g_{0}$ does not belong to $\V_{0}^{8}$ and let $D_{0}$ be a disk on which $g_{0}$ is supported. By Corollary \ref{c: control on branch points} there is some embedded $\mathcal{Q}$-graph $\Gamma$ such that $H^{+}_{\cup\Gamma}\subseteq\V^{0}$, $\Q\cap D_{0}=\emptyset$ and $\Gamma$ is transverse to $\partial D_{0}$. Using Fact \ref{f: transversality} we can find some $f\in H$ supported on $D_{0}$ such that $fg_{0}\cdot\Gamma$ and $\Gamma$ are transverse on $\mathring{D}_{0}$.
  By Lemma \ref{l: bigon} there are $h_{0}\in H^{+}_{[\Gamma]}$, $h_{1}\in\V_{0}$ supported on $D_{0}$ such that $h_{0}h_{1}\cdot\Gamma\simeq\Gamma'$.
  
  The element $\psi:=h_{1}^{-1}h_{0}^{-1}fg_{0}$ is the identity in a neighbourhood of $\Q$ and satisfies $\psi\cdot\Gamma\simeq\Gamma$. It follows that $\psi\in H^{+}_{[\Gamma]}$.
  Finally, note that $H^{+}_{[\Gamma]}\subseteq\V_{0}^{3}$ by Lemma \ref{l: flexible stabilizer} so that $$g_{0}\in f^{-1}h_{0}h_{1}H^{+}_{[\Gamma]}\subseteq \V_{0}H^{+}_{[\Gamma]}\V_{0}H^{+}_{[\Gamma]}\subseteq\V_{0}^{8},$$ a contradiction.
  \end{proof}

  \bibliographystyle{plain}
  \bibliography{bibliography}

\begin{thebibliography}{10}

\bibitem{arens1946topologies}
Richard Arens.
\newblock Topologies for homeomorphism groups.
\newblock {\em American Journal of Mathematics}, 68(4):593--610, 1946.

\bibitem{chang2017minimum}
Xiao Chang and Paul Gartside.
\newblock Minimum topological group topologies.
\newblock {\em Journal of Pure and Applied Algebra}, 221(8):2010--2024, 2017.

\bibitem{dikranjan2014minimality}
Dikran Dikranjan and Michael Megrelishvili.
\newblock Minimality conditions in topological groups.
\newblock In {\em Recent progress in general topology III}, pages 229--327.
  Springer, 2014.

\bibitem{farb2011primer}
Benson Farb and Dan Margalit.
\newblock {\em A primer on mapping class groups (pms-49)}.
\newblock Princeton university press, 2011.

\bibitem{hirsch2012differential}
Morris~W Hirsch.
\newblock {\em Differential topology}, volume~33.
\newblock Springer Science \& Business Media, 2012.

\bibitem{kallman1986uniqueness}
Robert~R Kallman.
\newblock Uniqueness results for homeomorphism groups.
\newblock {\em Transactions of the American Mathematical Society},
  295(1):389--396, 1986.

\bibitem{mann2016automatic}
Kathryn Mann.
\newblock Automatic continuity for homeomorphism groups and applications.
\newblock {\em Geometry \& Topology}, 20(5):3033--3056, 2016.

\bibitem{mann2020automatic}
Kathryn Mann.
\newblock Automatic continuity for homeomorphism groups of noncompact
  manifolds.
\newblock {\em arXiv preprint arXiv:2003.01173}, 2020.

\bibitem{milnor2016morse}
John Milnor.
\newblock Morse theory.(am-51), volume 51.
\newblock In {\em Morse Theory.(AM-51), Volume 51}. Princeton university press,
  2016.

\bibitem{moise2013geometric}
Edwin~E Moise.
\newblock {\em Geometric topology in dimensions 2 and 3}, volume~47.
\newblock Springer Science \& Business Media, 2013.

\bibitem{newman1939elements}
Maxwell Herman~Alexander Newman.
\newblock {\em Elements of the topology of plane sets of points}.
\newblock Cambridge, 1939.

\bibitem{rosendal2008automatic}
Christian Rosendal.
\newblock Automatic continuity in homeomorphism groups of compact 2-manifolds.
\newblock {\em Israel Journal of Mathematics}, 166(1):349--367, 2008.

\end{thebibliography}
  
\end{document}